\newtheorem{theorem}{Theorem}[section]
\newtheorem{lemma}[theorem]{Lemma}
\newtheorem{corollary}[theorem]{Corollary}
\theoremstyle{definition}
\theoremstyle{remark}
\numberwithin{equation}{section}
\begin{document}

\setcounter{page}{1}

\title[]{Characterizing linear mappings through zero products or zero Jordan products}

\author[G. An, J. He and L. Ji]{ Guangyu An$^1$, Jun He$^2$  and  Jiankui  Li$^{3*}$}

\address{$^{1}$ Department of Mathematics, Shaanxi University of Science and Technology,
Xi'an 710021, China.}
\email{\textcolor[rgb]{0.00,0.00,0.84}{anguangyu310@163.com}}
\address{$^{2}$ Department of Mathematics, Anhui Polytechnic University,
Wuhu 241000, China.}
\email{\textcolor[rgb]{0.00,0.00,0.84}{hejun$_{-}$12@163.com}}
\address{$^{3*}$ Department of Mathematics, East China University of
Science and Technology, Shanghai 200237, China.}
\email{\textcolor[rgb]{0.00,0.00,0.84}{jiankuili@yahoo.com}}

\subjclass[2010]{15A86, 47A07, 47B47, 47B49.}

\keywords{$*$-(Jordan) derivation, $*$-(Jordan) left derivation,
zero (Jordan) product determined algebra, $C^*$-algebra, von Neumann algrbra.
\newline \indent $^{*}$ Corresponding author}

\begin{abstract}
Let $\mathcal{A}$ be a $*$-algebra and $\mathcal{M}$ be a $*$-$\mathcal A$-bimodule,
we study the local properties of
$*$-derivations and $*$-Jordan derivations
from $\mathcal{A}$ into $\mathcal{M}$ under the following orthogonality conditions on elements
in $\mathcal A$: $ab^*=0$, $ab^*+b^*a=0$ and $ab^*=b^*a=0$.
We characterize the mappings on zero product determined algebras and zero Jordan product determined algebras.
Moreover, we give some applications on $C^*$-algebras, group algebra, matrix algebras, algebras of locally measurable operators
and von Neumann algebras.
\end{abstract}\maketitle


\section{Introduction}

Throughout this paper, let $\mathcal{A}$ be an associative algebra
over the complex field $\mathbb{C}$ and $\mathcal{M}$ be an $\mathcal{A}$-bimodule.
For each $a$, $b$ in $\mathcal A$, we define the \emph{Jordan product} by
$a\circ b=ab+ba$.
A linear mapping $\delta$ from $\mathcal{A}$ into $\mathcal{M}$ is
called a \emph{derivation} if $\delta(ab)=a\delta(b)+\delta(a)b$ for each $a,b$ in $\mathcal{A}$;
and $\delta$ is called a \emph{Jordan derivation} if $\delta(a\circ b)=a\circ\delta(b)+\delta(a)\circ b$
for each $a,b$ in $\mathcal{A}$.
It follows from the results in \cite{Cuntz, hejazian, Johnson1} that
every Jordan derivation from a $C^*$-algebra into its Banach bimodule is a derivation.

By an \emph{involution} on an algebra $\mathcal A$, we mean a mapping
$*$ from $\mathcal{A}$ into itself, such that
$$(\lambda a+\mu b)^*=\bar{\lambda}a^*+\bar{\mu}b^*,~(ab)^*=b^*a^*~\mathrm{and}~(a^*)^*=a,$$
whenever $a,b$ in $\mathcal{A}$, $\lambda,\mu$ in $\mathbb{C}$ and $\bar{\lambda},\bar{\mu}$
denote the conjugate complex numbers.
An algebra $\mathcal A$ equipped with an involution is called a $*$-algebra. Moreover,
let $\mathcal A$ be a $*$-algebra, an $\mathcal A$-bimodule $\mathcal M$ is called a $*$-$\mathcal A$-bimodule
if $\mathcal M$ equipped with a $*$-mapping from $\mathcal M$ into itself, such that
$$(\lambda m+\mu n)^*=\bar{\lambda}m^*+\bar{\mu}n^*,~(am)^*=m^*a^*,~(ma)^*=a^*m^*~\mathrm{and}~(m^*)^*=m,$$
whenever $a$ in $\mathcal{A}$, $m,n$ in $\mathcal{M}$ and $\lambda,\mu$ in $\mathbb{C}$.
An element $a$ in $\mathcal{A}$ is called \emph{self-adjoint} if $a^*=a$;
an element $p$ in $\mathcal{A}$ is called an \emph{idempotent} if $p^2=p$; and $p$ is called a \emph{projection}
if $p$ is both a self-adjoint element and an idempotent.

In \cite{Kishimoto}, A. Kishimoto studies the $*$-derivations on a $C^*$-algebra.
Let $\mathcal A$ be a $*$-algebra and $\mathcal M$ be a $*$-$\mathcal A$-bimodule.
A derivation $\delta$ from $\mathcal{A}$ into $\mathcal{M}$ is called a
\emph{$*$-derivation}  if $\delta(a^*)=\delta(a)^*$ for every $a$ in $\mathcal A$.
Obviously, every derivation $\delta$ is a linear combination of two
$*$-derivations. In fact,
we can define a linear mapping
$\hat{\delta}$ from $\mathcal{A}$ into $\mathcal{M}$ by $\hat{\delta}(a)=\delta(a^*)^*$ for every
$a$ in $\mathcal A$, therefore $\delta=\delta_1+i\delta_2$,
where $\delta_1=\frac{1}{2}(\delta+\hat{\delta})$ and $\delta_2=\frac{1}{2i}(\delta-\hat{\delta})$.
It is easy to show that $\delta_1$ and $\delta_2$ are both $*$-derivations. Similarly, we
can define the $*$-Jordan derivations.

For $*$-derivations and $*$-Jordan derivations, in \cite{J. Alaminos2, B. Fadaee, Hoger Ghahramani1, Hoger Ghahramani2}, the authors
characterize the following two conditions
on a linear mapping $\delta$ from a $*$-algebra $\mathcal{A}$ into its $*$-bimodule $\mathcal{M}$:
\begin{align*}
&(\mathbb{D}_{1})~a,b\in\mathcal{A},~ab^*=0\Rightarrow a\delta(b)^*+\delta(a)b^*=0;\\
&(\mathbb{D}_{2})~a,b\in\mathcal{A},~ab^*=b^*a=0\Rightarrow a\delta(b)^*+\delta(a)b^*=\delta(b)^*a+b^*\delta(a)=0;
\end{align*}
where $\mathcal A$ is a $C^*$-algebra, a zero product determined algebra or a group algebra $L^1(G)$.

Let $\mathcal{J}$ be an ideal of $\mathcal{A}$, we say that $\mathcal{J}$
is a \emph{right separating set} or \emph{left separating set} of $\mathcal{M}$
if for every $m$ in $\mathcal{M}$,
$\mathcal{J}m=\{0\}$ implies $m=0$ or $m\mathcal{J}=\{0\}$ implies $m=0$, respectively.
We denote by $\mathfrak{J}(\mathcal{A})$
the subalgebra of $\mathcal A$ generated algebraically by all
idempotents in $\mathcal{A}$.

In Section 2, we suppose that $\mathcal A$ is a $*$-algebra and $\mathcal{M}$ is a $*$-$\mathcal A$-bimodule that
satisfy one of the following conditions:\\
$(1)$ $\mathcal A$ is a zero product determined Banach $*$-algebra with a bounded approximate identity and
$\mathcal M$ is an essential Banach $*$-$\mathcal A$-bimodule;\\
$(2)$ $\mathcal{A}$ is a von Neumann algebra and $\mathcal M=\mathcal A$;\\
$(3)$ $\mathcal A$ is a unital $*$-algebra and $\mathcal{M}$ is a unital $*$-$\mathcal A$-bimodule
with a left or right separating set $\mathcal{J}\subseteq\mathfrak{J}(\mathcal{A})$;\\
and we investigate whether the linear mappings from $\mathcal A$ into $\mathcal M$ satisfying the
condition $\mathbb{D}_{1}$ characterize $*$-derivations.
In particular, we generalize some results in \cite{B. Fadaee, Hoger Ghahramani1, Hoger Ghahramani2}.

An $\mathcal A$-bimodule $\mathcal{M}$ is said to have the \emph{property $\mathbb{M}$},
if there is an ideal $\mathcal{J}\subseteq\mathfrak{J}(\mathcal{A})$ of $\mathcal{A}$ such that
$$\{m\in\mathcal{M}: xmx=0~\mathrm{for~every}~x\in\mathcal{J}\}=\{0\}.$$
It is clear that if $\mathcal{A}=\mathfrak{J}(\mathcal{A})$, then $\mathcal{M}$ has
property $\mathbb{M}$.

For $*$-Jordan derivations, we can study the following conditions on
a linear mapping $\delta$ from a $*$-algebra $\mathcal A$ into its $*$-$\mathcal A$-bimodule $\mathcal M$:
\begin{align*}
&(\mathbb{D}_{3})~a,b\in\mathcal{A},~a\circ b^*=0\Rightarrow a\circ\delta(b)^*+\delta(a)\circ b^*=0.\\
&(\mathbb{D}_{4})~a,b\in\mathcal{A},~ab^*=b^*a=0\Rightarrow a\circ\delta(b)^*+\delta(a)\circ b^*=0.
\end{align*}
It is obvious that the condition $\mathbb{D}_{2}$ or $\mathbb{D}_{3}$ implies the condition $\mathbb{D}_{4}$.

In Section 3, we suppose that $\mathcal A$ is a $*$-algebra and $\mathcal{M}$ is a $*$-$\mathcal A$-bimodule that
satisfy one of the following conditions:\\
$(1)$ $\mathcal A$ is a unital zero Jordan product determined $*$-algebra and
$\mathcal M$ is a unital $*$-$\mathcal A$-bimodule;\\
$(2)$ $\mathcal A$ is a unital $*$-algebra and $\mathcal{M}$ is a unital $*$-$\mathcal A$-bimodule
such that the property $\mathbb{M}$;\\
$(3)$ $\mathcal A$ is a $C^*$-algebra (not necessary unital) and $\mathcal M$ is an essential Banach $*$-$\mathcal A$-bimodule;\\
and we investigate whether the linear mappings from $\mathcal A$ into $\mathcal M$ satisfying the
condition $\mathbb{D}_{3}$ or $\mathbb{D}_{4}$ characterize $*$-Jordan derivations.
In particular, we improve some results in \cite{B. Fadaee, Hoger Ghahramani1, Hoger Ghahramani2}.

\section{$*$-derivations on some algebras}

A (Banach) algebra $\mathcal{A}$ is said to be \emph{zero product determined}
if every (continuous) bilinear mapping $\phi$ from $\mathcal{A}\times\mathcal{A}$
into any (Banach) linear space $\mathcal{X}$ satisfying
$$\phi(a, b)=0,~\mathrm{ whenever } \  ab = 0$$
can be written as $\phi(a, b)= T(ab)$, for some (continuous) linear mapping $T$
from $\mathcal{A}$ into $\mathcal X$.
In \cite{M. Bresar 3}, M. Bre\v{s}ar shows that if $\mathcal A=\mathfrak{J}(\mathcal{A})$,
then $\mathcal A$ is a zero product determined, and in \cite{J. Alaminos},
the authors prove that every $C^*$-algebra $\mathcal A$ is zero product determined.

Let $\mathcal A$ be a Banach $*$-algebra and $\mathcal M$ be a
Banach $*$-$\mathcal A$-bimodule. Denote by $\mathcal M^{\sharp\sharp}$ the second dual
space of $\mathcal M$. In the following, we show that $\mathcal M^{\sharp\sharp}$
is also a Banach $*$-$\mathcal A$-bimodule.

Since $\mathcal M$ is a Banach $*$-$\mathcal A$-bimodule,
$\mathcal{M}^{\sharp\sharp}$ turns into a dual Banach $\mathcal{A}$-bimodule with the operation defined by
$$a\cdot m^{\sharp\sharp}=\lim\limits_{\mu}am_{\mu}~\mathrm{and}~m^{\sharp\sharp}\cdot a=\lim\limits_{\mu}m_{\mu}a$$
for every $a$ in $\mathcal{A}$ and every $m^{\sharp\sharp}$ in $\mathcal{M}^{\sharp\sharp}$, where
$(m_{\mu})$ is a net in $\mathcal{M}$ with $\|m_{\mu}\|\leqslant\|m^{\sharp\sharp}\|$
and $(m_{\mu})\rightarrow m^{\sharp\sharp}$ in the weak$^{*}$-topology $\sigma(\mathcal{M}^{\sharp\sharp},\mathcal{M}^{\sharp})$.

We define an involution $*$ in $\mathcal M^{\sharp\sharp}$ by
$$(m^{\sharp\sharp})^*(\rho)=\overline{m^{\sharp\sharp}(\rho^*)},~~~~\rho^*(m)=\overline{\rho(m^*)},$$
where
$m^{\sharp\sharp}$ in $\mathcal{M}^{\sharp\sharp}$, $\rho$ in $\mathcal M^{\sharp}$ and $m$ in $\mathcal M$.
Moreover, if $(m_{\mu})$ is a net in $\mathcal{M}$ and $m^{\sharp\sharp}$ is an element in $\mathcal M^{\sharp\sharp}$ such that
$m_\mu\rightarrow m^{\sharp\sharp}$ in $\sigma(\mathcal{M}^{\sharp\sharp},\mathcal{M}^{\sharp})$,
then for every $\rho$ in $\mathcal{M}^{\sharp}$, we have that
$$\rho(m_\mu)=m_\mu(\rho)\rightarrow m^{\sharp\sharp}(\rho).$$
It follows that
$$(m_\mu^*)(\rho)=\rho(m_\mu^*)=\overline{\rho^*(m_\mu)}\rightarrow \overline{m^{\sharp\sharp}(\rho^*)}=(m^{\sharp\sharp})^*(\rho)$$
for every $\rho$ in $\mathcal{M}^{\sharp}$. It means that the involution $*$ in $\mathcal M^{\sharp\sharp}$ is continuous
in $\sigma(\mathcal{M}^{\sharp\sharp}, \mathcal{M}^{\sharp})$.
Thus we can obtain that
$$(a\cdot m^{\sharp\sharp})^*=(\lim\limits_{\mu}am_{\mu})^*=
\lim\limits_{\mu}m_{\mu}^*a^*=(m^{\sharp\sharp})^*\cdot a^*,$$
Similarly, we can show that $(m^{\sharp\sharp}\cdot a)^*=a^*\cdot(m^{\sharp\sharp})^*$.
It implies that $\mathcal M^{\sharp\sharp}$ is a Banach $*$-$\mathcal A$-bimodule.

Let $\mathcal A$ be a Banach $*$-algebra, a \emph{bounded approximate identity} for $\mathcal A$
is a net $(e_{i})_{i\in\Gamma}$ of self-adjoint elements in $\mathcal A$ such that $\lim\limits_{i}\|ae_i-a\|=\lim\limits_{i}\|e_ia-a\|=0$
for every $a$ in $\mathcal A$ and $\mathrm{sup}_{i\in\Gamma}\|e_{i}\|\leq k$ for some $k>0$ .

In \cite{Hoger Ghahramani2}, H. Ghahramani and Z. Pan prove that if $\mathcal A$ is
a unital zero product determined $*$-algebra and a linear mapping $\delta$
from $\mathcal A$ into itself satisfies the condition
$$(\mathbb{D}_{1})~a,b\in\mathcal{A},~ab^*=0\Rightarrow a\delta(b)^*+\delta(a)b^*=0$$
then $\delta(a)=\Delta(a)+\delta(1)a$ for every $a$ in $\mathcal A$, where
$\Delta$ is a $*$-derivation.

For general zero product determined Banach $*$-algebra with a bounded approximate identity, we have the following result.

\begin{theorem}\label{++++5}
Suppose that $\mathcal A$ is a zero product determined Banach $*$-algebra
with a bounded approximate identity, and $\mathcal M$ is an essential Banach $*$-$\mathcal A$-bimodule.
If $\delta$ is a continuous linear mapping from $\mathcal A$
into $\mathcal M$ such that
$$a,b\in\mathcal{A},~ab^*=0\Rightarrow a\delta(b)^*+\delta(a)b^*=0$$
then there exist a $*$-derivation $\Delta$ from $\mathcal A$
into $\mathcal M^{\sharp\sharp}$ and an element $\xi$ in $\mathcal M^{\sharp\sharp}$ such that
$\delta(a)=\Delta(a)+\xi\cdot a$ for every $a$ in $\mathcal A$. Furthermore, $\xi$
can be chosen in $\mathcal M$ in each of the following cases:\\
$\mathrm{(1)}$ $\mathcal A$ is a unital $*$-algebra.\\
$\mathrm{(2)}$ $\mathcal M$ is a dual $*$-$\mathcal A$-bimodule.
\end{theorem}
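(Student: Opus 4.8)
The plan is to reformulate the hypothesis as a genuine zero-product condition and feed it to the zero-product-determined property of $\mathcal A$. Introduce $\hat{\delta}(a)=\delta(a^*)^*$ as in the introduction, and note $\delta(b)^*=\hat{\delta}(b^*)$; substituting $c=b^*$ shows that the hypothesis is equivalent to
\[
ac=0\ \Longrightarrow\ \delta(a)c+a\hat{\delta}(c)=0,\qquad a,c\in\mathcal A .
\]
Hence the continuous bilinear map $\phi\colon\mathcal A\times\mathcal A\to\mathcal M$ given by $\phi(a,c)=\delta(a)c+a\hat{\delta}(c)$ vanishes whenever $ac=0$, so the zero-product-determined property yields a continuous linear $T\colon\mathcal A\to\mathcal M$ with $\delta(a)c+a\hat{\delta}(c)=T(ac)$ for all $a,c$. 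Taking adjoints and using $\delta(a)^*=\hat{\delta}(a^*)$, $\hat{\delta}(c)^*=\delta(c^*)$ reproduces the same identity with $a,c$ replaced by $c^*,a^*$, and in particular shows $T(a^*)=T(a)^*$.

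Next I would manufacture $\xi$ from the bounded approximate identity $(e_i)$. Since $(\delta(e_i))$ is bounded in $\mathcal M\subseteq\mathcal M^{\sharp\sharp}$, I pass to a subnet with $\delta(e_i)\to\xi$ in $\sigma(\mathcal M^{\sharp\sharp},\mathcal M^{\sharp})$; because each $e_i$ is self-adjoint, $\hat{\delta}(e_i)=\delta(e_i)^*$, so the weak-$*$ continuity of the involution on $\mathcal M^{\sharp\sharp}$ established above gives $\hat{\delta}(e_i)\to\xi^*$ along the same subnet. I then take weak-$*$ limits in $\phi(e_i,c)=T(e_ic)$ and in $\phi(a,e_i)=T(ae_i)$: as $\mathcal M$ is essential with a bounded approximate identity, $e_ic\to c$, $e_i\hat{\delta}(c)\to\hat{\delta}(c)$, $\delta(a)e_i\to\delta(a)$ and $ae_i\to a$ in norm, while the one-sided actions on $\mathcal M^{\sharp\sharp}$ are separately weak-$*$ continuous. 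The two limits give
\[
T(c)=\hat{\delta}(c)+\xi\cdot c,\qquad T(a)=\delta(a)+a\cdot\xi^*,
\]
and consequently $\delta(a)-\hat{\delta}(a)=\xi\cdot a-a\cdot\xi^*$ for every $a$.

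Then I would substitute back. Inserting $T(ac)=\delta(ac)+ac\cdot\xi^*$ into the defining identity and eliminating $\hat{\delta}(c)$ through $\hat{\delta}(c)=\delta(c)-\xi\cdot c+c\cdot\xi^*$ collapses everything to
\[
\delta(ac)=\delta(a)c+a\delta(c)-a\cdot\xi\cdot c,\qquad a,c\in\mathcal A .
\]
Setting $\Delta(a)=\delta(a)-\xi\cdot a$, a direct computation using associativity of the $\mathcal M^{\sharp\sharp}$-actions turns this into $\Delta(ac)=a\Delta(c)+\Delta(a)c$, so $\Delta$ is a derivation into $\mathcal M^{\sharp\sharp}$; moreover $\delta(a)-\hat{\delta}(a)=\xi\cdot a-a\cdot\xi^*$ together with $\delta(a)^*=\hat{\delta}(a^*)$ gives $\Delta(a^*)=\Delta(a)^*$, so $\Delta$ is a $*$-derivation and $\delta(a)=\Delta(a)+\xi\cdot a$.

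For the last two assertions: in case $(1)$, with $\mathcal A$ unital, I take the constant net $e_i=1$, so that no weak-$*$ limit is needed and $\xi=\delta(1)\in\mathcal M$. In case $(2)$, with $\mathcal M$ a dual $*$-$\mathcal A$-bimodule, I use the canonical contractive bimodule $*$-projection $P\colon\mathcal M^{\sharp\sharp}\to\mathcal M$ (the adjoint of the inclusion of the predual into $\mathcal M^{\sharp}$); applying $P$ to $\delta(a)=\Delta(a)+\xi\cdot a$ and using that $P$ fixes $\mathcal M$ and commutes with the actions and the involution yields $\delta(a)=(P\circ\Delta)(a)+P(\xi)\cdot a$ with $P\circ\Delta$ a $*$-derivation and $P(\xi)\in\mathcal M$. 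The main obstacle I anticipate is the bookkeeping of the weak-$*$ limits in $\mathcal M^{\sharp\sharp}$—in particular arranging a single subnet along which both $\delta(e_i)$ and $\hat{\delta}(e_i)$ converge and carefully invoking the separate weak-$*$ continuity of the left and right actions and of the involution—rather than any single hard estimate.
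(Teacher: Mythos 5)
Your argument is correct, but it takes a genuinely different route from the paper's. The paper first weakens the hypothesis to the triple condition $ab=bc=0\Rightarrow a\delta(b)c=0$ and invokes \cite[Theorem 4.5]{J. Alaminos} as a black box to obtain $\delta(ab)=\delta(a)b+a\delta(b)-a\cdot\xi\cdot b$, together with the unital/dual refinements on where $\xi$ lives; having set $\Delta(a)=\delta(a)-\xi\cdot a$, it then needs a \emph{second} application of the zero-product-determined property, to the map $\phi(a,b)=a\Delta(b^*)^*+\Delta(a)b$, followed by approximate-identity limits, in order to prove $\Delta(b^*)=\Delta(b)^*$. You instead keep the full strength of the hypothesis: rewriting it through $\hat{\delta}(a)=\delta(a^*)^*$ as an honest zero-product condition, you make a single ZPD application and extract everything by weak$^*$ bookkeeping --- the two factorizations $T=\hat{\delta}+\xi\cdot(\,\cdot\,)$ and $T=\delta+(\,\cdot\,)\cdot\xi^*$, hence the identity $\delta-\hat{\delta}=\xi\cdot(\,\cdot\,)-(\,\cdot\,)\cdot\xi^*$, from which both the generalized derivation law and the $*$-symmetry of $\Delta$ follow algebraically (using the bimodule identities $(\xi\cdot a)^*=a^*\cdot\xi^*$ established before the theorem). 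In effect you reprove, in the special case at hand, the content of the cited Theorem 4.5, and the $*$-property comes out for free rather than via a second ZPD argument; the price is that you must also prove the two ``furthermore'' cases yourself, which you do correctly (the constant net $e_i=1$ when $\mathcal A$ is unital; the canonical weak$^*$-continuous bimodule projection $P\colon\mathcal M^{\sharp\sharp}\to\mathcal M$ when $\mathcal M$ is dual --- note that $P\circ\Delta$ being a $*$-derivation uses that the involution of $\mathcal M$ is weak$^*$-continuous, i.e.\ that the predual is $*$-invariant, which is the natural reading of ``dual $*$-$\mathcal A$-bimodule'' and is equally needed for the paper's citation). Your route is more self-contained and makes the $*$-part transparent; the paper's route is shorter on the page because the analytic core is outsourced to \cite{J. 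Alaminos}, and the weaker triple condition it isolates is reusable elsewhere (e.g.\ in Remark 2).
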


\begin{proof}
Let $(e_{i})_{i\in\Gamma}$ be a bounded approximate identity of $\mathcal A$.
Since $\delta$ is continuous, the net $(\delta(e_{i}))_{i\in\Gamma}$ is bounded and we can assume that it converges to $\xi$ in $\mathcal M^{\sharp\sharp}$
with the topology $\sigma(\mathcal M^{\sharp\sharp},\mathcal M^{\sharp})$.

Since $\mathcal M$ is an essential Banach $*$-$\mathcal A$-bimodule,
we know that the nets $(e_i m)_{i\in\Gamma}$ and $(me_i)_{i\in\Gamma}$ converge
to $m$ with the norm topology for every $m$ in $\mathcal M$.
Thus we have that
$$\mathrm{Ann}_{\mathcal M}(\mathcal A)=\{m\in\mathcal M:amb=0~\mathrm{for~each}~a,b\in\mathcal A\}=\{0\}.$$
By the hypothesis, we can obtain that
$$a,b,c\in\mathcal{A},~ab^*=b^*c=0\Rightarrow a\delta(b)^*c=0.$$
It follows that
\begin{align}
a,b,c\in\mathcal{A},~ab=bc=0\Rightarrow c^*b^*=b^*a^*=0\Rightarrow c^*\delta(b)^*a^*=0\Rightarrow a\delta(b)c=0.                                                    \label{812-01}
\end{align}
By \eqref{812-01} and \cite[Theorem 4.5]{J. Alaminos}, we know that
$$\delta(ab)=\delta(a)b+a\delta(b)-a\cdot\xi\cdot b$$
for each $a,b$ in $\mathcal A$, and $\xi$
can be chosen in $\mathcal M$ if $\mathcal A$ is a unital $*$-algebra or $\mathcal M$ is a dual $*$-$\mathcal A$-bimodule.

Define a linear mapping $\Delta$ from $\mathcal{A}$
into $\mathcal{M}$ by
$$\Delta(a)=\delta(a)-\xi\cdot a$$
for every $a$ in $\mathcal A$.
It is easy to show that $\Delta$ is a norm-continuous derivation from $\mathcal A$
into $\mathcal M^{\sharp\sharp}$ and we only need
to show that $\Delta(b^*)=\Delta(b)^*$ for every $b$ in $\mathcal A$.

First we claim that $\Delta(e_i)=\delta(e_i)-\xi\cdot e_i$ converges to zero in $\mathcal M^{\sharp\sharp}$
with the topology $\sigma(\mathcal M^{\sharp\sharp},\mathcal M^{\sharp})$.
In fact, since $(e_{i})_{i\in\Gamma}$ is bounded in $\mathcal A$, we assume $(e_{i})_{i\in\Gamma}$
converges to $\zeta$ in $\mathcal A^{\sharp\sharp}$ with the topology $\sigma(\mathcal A^{\sharp\sharp},\mathcal A^{\sharp})$.
For every $m^{\sharp\sharp}$ in $\mathcal M^{\sharp\sharp}$, define
$$m^{\sharp\sharp}\cdot\zeta=\lim\limits_{i}m^{\sharp\sharp}\cdot e_{i}.$$
Thus $m\cdot\zeta=m$ for every $m$ in $\mathcal M$.
By \cite[Proposition A.3.52]{H. Dales}, we know that the mapping $m^{\sharp\sharp}\mapsto m^{\sharp\sharp}\cdot\zeta$
from $\mathcal{M}^{\sharp\sharp}$ into itself is $\sigma(\mathcal M^{\sharp\sharp},\mathcal M^{\sharp})$-continuous,
and by the
$\sigma(\mathcal M^{\sharp\sharp},\mathcal M^{\sharp})$-denseness of $\mathcal M$ in $\mathcal M^{\sharp\sharp}$,
we have that
\begin{align}
m^{\sharp\sharp}\cdot\zeta=m^{\sharp\sharp}                                                                                           \label{813-01}
\end{align}
for every $m^{\sharp\sharp}$ in $\mathcal M^{\sharp\sharp}$. Hence
$\Delta(e_i)=\delta(e_i)-\xi\cdot e_i$ converges to zero in $\mathcal M^{\sharp\sharp}$
with  the topology $\sigma(\mathcal M^{\sharp\sharp},\mathcal M^{\sharp})$.

Next we prove $\Delta(b^*)=\Delta(b)^*$ for every $b$ in $\mathcal A$.
By the definition of $\Delta$, we know that $a\Delta(b)^*+\Delta(a)b^*=0$ for each $a,b$ in $\mathcal A$
with $ab^*=0$.
Define a bilinear mapping from $\mathcal{A}\times\mathcal A$
into $\mathcal{M}^{\sharp\sharp}$ by
$$\phi(a,b)=a\Delta(b^*)^*+\Delta(a)b.$$
Thus $ab=0$ implies $\phi(a,b)=0$.
Since $\mathcal A$ is a zero product determined algebra,
there exists a norm-continuous linear mapping $T$ from $\mathcal A$ into $\mathcal M^{\sharp\sharp}$ such that
\begin{align}
T(ab)=\phi(a,b)=a\Delta(b^*)^*+\Delta(a)b                                                    \label{+++1}
\end{align}
for each $a,b$ in $\mathcal A$.
Let $b=e_i$ be in \eqref{+++1}, we can obtain that
$$T(ae_i)=a\Delta(e_i)^*+\Delta(a)e_i.$$
By the continuity of $T$ and  \eqref{813-01}, it follows that
$T(a)=\Delta(a)$ for every $a$ in $\mathcal A$.
Thus
$$T(ab)=\Delta(ab)=a\Delta(b^*)^*+\Delta(a)b.$$
Since $\Delta$ is a derivation, we have that $a\Delta(b^*)^*=a\Delta(b)$ and $\Delta(b^*)a^*=\Delta(b)^*a^*$.
Let $a=e_i$ and taking $\sigma(\mathcal M^{\sharp\sharp},\mathcal M^{\sharp})$-limits, by \eqref{813-01}, it follows that $\Delta(b^*)=\Delta(b)^*$ for every $b$ in $\mathcal A$.
\end{proof}

Let $G$ be a locally compact group. The group algebra and the measure convolution algebra of $G$,
are denoted by $L^{1}(G)$ and $M(G)$, respectively. The convolution product is denote by $\cdot$
and the involution is denoted by $*$. It is well known that $M(G)$ is a unital Banach $*$-algebra,
and $L^{1}(G)$ is a closed ideal in $M(G)$ with a bounded approximate identity.
By \cite[Lemma 1.1]{J. Alaminos2}, we know that $L^{1}(G)$ is zero product determined.
By \cite[Theorem 3.3.15(ii)]{H. Dales}, it follows that
$M(G)$ with respect to convolution product is the dual of $C_{0}(G)$ as a Banach $M(G)$-bimodule.

By \cite[Corollary 1.2]{Viktor Losert}, we know that every continuous derivation $\Delta$ from $L^{1}(G)$ into $M(G)$
is an inner derivation, that is, there exists $\mu$ in $M(G)$ such that $\Delta(f)=f\cdot\mu-\mu\cdot f$
for every $f$ in $L^{1}(G)$. Thus by Theorem \ref{++++5}, we can prove \cite[Theorem 3.1(ii)]{Hoger Ghahramani1}
as follows.

\begin{corollary}
Let $G$ be a locally compact group. If $\delta$ is a continuous linear mapping from $L^{1}(G)$ into $M(G)$ such that
$$f,g\in L^{1}(G),~f\cdot g^*=0\Rightarrow f\cdot\delta(g)^*+\delta(f)\cdot g^*=0$$
then there are $\mu,\nu$ in $M(G)$ such that
$$\delta(f)=f\cdot\mu-\nu\cdot f$$
for every $f$ in $L^{1}(G)$ and $\mathrm{Re}\mu\in\mathcal Z(M(G))$.
\end{corollary}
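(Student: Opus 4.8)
The plan is to apply Theorem \ref{++++5} to the pair $\mathcal A=L^1(G)$ and $\mathcal M=M(G)$, and then to combine its conclusion with Losert's theorem on inner derivations. First I would check the hypotheses: $L^1(G)$ is a zero product determined Banach $*$-algebra with a bounded approximate identity, and since $M(G)=C_0(G)^\sharp$ it is a dual Banach $*$-$L^1(G)$-bimodule. Thus Theorem \ref{++++5}, in its case $\mathrm{(2)}$, produces an element $\xi\in M(G)$ and a continuous $*$-derivation $\Delta$ with $\delta(f)=\Delta(f)+\xi\cdot f$ for every $f\in L^1(G)$. Because $\xi$ lies in $M(G)$ and $\delta$ maps into $M(G)$, the identity $\Delta(f)=\delta(f)-\xi\cdot f$ shows that $\Delta$ actually takes its values in $M(G)$, so it is a genuine continuous derivation from $L^1(G)$ into $M(G)$ that moreover satisfies $\Delta(f^*)=\Delta(f)^*$.

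Next I would invoke \cite[Corollary 1.2]{Viktor Losert}: every continuous derivation from $L^1(G)$ into $M(G)$ is inner, so there is $\mu\in M(G)$ with $\Delta(f)=f\cdot\mu-\mu\cdot f$ for all $f$. Substituting this into the decomposition gives $\delta(f)=f\cdot\mu-\mu\cdot f+\xi\cdot f=f\cdot\mu-\nu\cdot f$, where $\nu:=\mu-\xi\in M(G)$. This already yields the stated form of $\delta$, so the only remaining point is the centrality of $\mathrm{Re}\,\mu$.

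The hard part will be extracting $\mathrm{Re}\,\mu\in\mathcal Z(M(G))$ from the $*$-derivation identity $\Delta(f^*)=\Delta(f)^*$. I would compute both sides using $\Delta(f)=f\cdot\mu-\mu\cdot f$: from $(ab)^*=b^*a^*$ one gets $\Delta(f)^*=\mu^*\cdot f^*-f^*\cdot\mu^*$, while $\Delta(f^*)=f^*\cdot\mu-\mu\cdot f^*$. Equating these and writing $g=f^*$ (which ranges over all of $L^1(G)$) leads, after rearrangement, to $g\cdot(\mu+\mu^*)=(\mu+\mu^*)\cdot g$ for every $g\in L^1(G)$; that is, $2\,\mathrm{Re}\,\mu=\mu+\mu^*$ commutes with every element of $L^1(G)$.

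Finally I would upgrade this to commutation with all of $M(G)$. Since $L^1(G)$ is weak$^*$-dense in $M(G)=C_0(G)^\sharp$ and the convolution product on $M(G)$ is separately weak$^*$-continuous, an element commuting with the weak$^*$-dense subalgebra $L^1(G)$ must lie in the center; hence $\mathrm{Re}\,\mu\in\mathcal Z(M(G))$, completing the argument. The one step deserving care is confirming that Theorem \ref{++++5} is genuinely applicable here, namely that the approximate-identity convergence and annihilator properties used in its proof are available for the $L^1(G)$-bimodule $M(G)$; everything after that is a direct computation combined with the cited density and separate-continuity facts for the measure algebra.
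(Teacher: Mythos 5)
Your proposal is correct and follows essentially the same route as the paper: apply Theorem \ref{++++5} with $\mathcal A=L^{1}(G)$ and $\mathcal M=M(G)$ (using the dual-module case to place $\xi$ in $M(G)$, so that $\Delta(f)=\delta(f)-\xi\cdot f$ is a continuous $*$-derivation into $M(G)$), invoke \cite[Corollary 1.2]{Viktor Losert} to write $\Delta(f)=f\cdot\mu-\mu\cdot f$, and set $\nu=\mu-\xi$. The only divergence is the centrality step: the paper obtains $\mathrm{Re}\,\mu\in\mathcal Z(M(G))$ by quoting \cite[Lemma 1.3(ii)]{J. Alaminos2}, whereas you reprove it by hand --- commutation of $\mu+\mu^{*}$ with every element of $L^{1}(G)$, then weak$^{*}$-density of $L^{1}(G)$ in $M(G)=C_{0}(G)^{\sharp}$ together with separate weak$^{*}$-continuity of convolution --- which is a correct, self-contained substitute for that citation. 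As for the applicability caveat you flag at the end, it is a genuine one: for non-discrete $G$ the bimodule $M(G)$ is \emph{not} essential over $L^{1}(G)$ in the neo-unital sense (indeed $L^{1}(G)\cdot M(G)\cdot L^{1}(G)\subseteq L^{1}(G)$, so the essential part of $M(G)$ is $L^{1}(G)$ itself), and the norm-convergence $m\cdot e_{i}\to m$ used in the proof of Theorem \ref{++++5} fails, e.g., for $m=\delta_{e}$; but the paper applies Theorem \ref{++++5} here in exactly the same way without comment, so your attempt is faithful to the paper's argument, and both would need the same repair (rerunning the proof of Theorem \ref{++++5} for a dual module, with weak$^{*}$-limits replacing the norm limits).
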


\begin{proof}
By Theorem \ref{++++5}, we know that there exist a $*$-derivation $\Delta$ from $L^{1}(G)$ into $M(G)$
and an element $\xi$ in $M(G)$ such that
$\delta(f)=\Delta(f)+\xi\cdot f$ for every $f$ in $L^{1}(G)$.
By \cite[Corollary 1.2]{Viktor Losert}, it follows that there exists $\mu$ in $M(G)$ such that $\Delta(f)=f\cdot\mu-\mu\cdot f$.
Since $\Delta(f^*)=\Delta(f)^*$, we have that
$$f^*\cdot\mu-\mu\cdot f^*=\mu^*\cdot f^*-f^*\cdot\mu^*$$
for every $f$ in $L^{1}(G)$. By \cite[Lemma 1.3(ii)]{J. Alaminos2}, we know $\mathrm{Re}\mu=\frac{1}{2}(\mu+\mu^*)\in\mathcal Z(M(G))$.
Let $\nu=\mu-\xi$, from the definition of $\Delta$, we have that
$\delta(f)=f\cdot\mu-\nu\cdot f$
for every $f$ in $L^{1}(G)$.
\end{proof}

For a general $C^*$-algebra $\mathcal A$, in \cite{B. Fadaee}, B. Fadaee and H. Ghahramani prove that
if $\delta$ is a continuous linear mapping from $\mathcal A$ into its second dual space $\mathcal A^{\sharp\sharp}$
such that the condition $\mathbb{D}_{1}$, then there exist a $*$-derivation $\Delta$ from $\mathcal A$
into $\mathcal A^{\sharp\sharp}$ and an element $\xi$ in $\mathcal A^{\sharp\sharp}$ such that
$\delta(a)=\Delta(a)+\xi a$ for every $a$ in $\mathcal A$.

In \cite{J. Alaminos}, the authors prove that every $C^*$-algebra $\mathcal A$ is zero product determined,
and it is well known that $\mathcal A$ has a bounded approximate identity.
Thus by Theorem \ref{++++5}, we can improve the result in \cite{B. Fadaee} for any
essential Banach $*$-bimodule.

\begin{corollary}\label{new2}
Suppose that $\mathcal A$ is a $C^*$-algebra and $\mathcal M$ is an essential Banach $*$-$\mathcal A$-bimodule.
If $\delta$ is a continuous linear mapping from $\mathcal A$ into $\mathcal M$ such that
$$a,b\in\mathcal{A},~ab^*=0\Rightarrow a\delta(b)^*+\delta(a)b^*=0$$
then there exist a $*$-derivation $\Delta$ from $\mathcal A$
into $\mathcal M^{\sharp\sharp}$ and an element $\xi$ in $\mathcal M^{\sharp\sharp}$ such that
$\delta(a)=\Delta(a)+\xi \cdot a$ for every $a$ in $\mathcal A$.  Furthermore, $\xi$
can be chosen in $\mathcal M$ in each of the following cases:\\
$\mathrm{(1)}$ $\mathcal A$ has an identity.\\
$\mathrm{(2)}$ $\mathcal M$ is a dual $*$-$\mathcal A$-bimodule.
\end{corollary}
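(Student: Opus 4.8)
The plan is to deduce this corollary directly from Theorem \ref{++++5}, so the real work reduces to checking that a $C^*$-algebra together with its essential Banach $*$-bimodule $\mathcal{M}$ satisfies every hypothesis of that theorem. First I would recall from \cite{J. Alaminos} that every $C^*$-algebra $\mathcal{A}$ is zero product determined; since a $C^*$-algebra is in particular a Banach $*$-algebra, the zero product determined hypothesis of Theorem \ref{++++5} holds. Next I would invoke the standard fact that every $C^*$-algebra possesses a bounded approximate identity, which may be taken to consist of positive elements of norm at most one and is therefore automatically self-adjoint in the sense required by the definition preceding the theorem. The one remaining hypothesis, that $\mathcal{M}$ is an essential Banach $*$-$\mathcal{A}$-bimodule, is assumed in the statement.

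With all hypotheses verified, Theorem \ref{++++5} applies without modification and produces a $*$-derivation $\Delta \colon \mathcal{A} \to \mathcal{M}^{\sharp\sharp}$ together with an element $\xi \in \mathcal{M}^{\sharp\sharp}$ satisfying $\delta(a) = \Delta(a) + \xi \cdot a$ for every $a$ in $\mathcal{A}$, which is the main assertion. For the two refinements, I would note that the theorem already guarantees that $\xi$ can be chosen in $\mathcal{M}$ whenever $\mathcal{A}$ is a unital $*$-algebra or $\mathcal{M}$ is a dual $*$-$\mathcal{A}$-bimodule. Since a $C^*$-algebra with an identity is in particular a unital $*$-algebra, case (1) is exactly the unital case of Theorem \ref{++++5}, and case (2) coincides verbatim with its dual-module case. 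I do not anticipate any genuine obstacle, as the $*$-derivation is obtained as a direct consequence rather than constructed anew; the only point requiring a word of care is the elementary bookkeeping that the corollary's hypothesis ``$\mathcal{A}$ has an identity'' matches the theorem's hypothesis ``$\mathcal{A}$ is a unital $*$-algebra,'' which is immediate.
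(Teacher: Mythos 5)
Your proposal is correct and matches the paper's own argument exactly: the paper likewise cites \cite{J. Alaminos} for the fact that every $C^*$-algebra is zero product determined, notes the existence of a bounded approximate identity, and then invokes Theorem \ref{++++5} verbatim, including both cases for when $\xi$ lies in $\mathcal{M}$. Your extra remark that the approximate identity may be chosen positive (hence self-adjoint, as the paper's definition requires) is a nice touch of care that the paper leaves implicit.
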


For von Neumann algebras, we have the following result.

\begin{theorem}\label{new01}
Suppose that $\mathcal{A}$ is a von Neumann algebra. If $\delta$ is a linear mapping
from $\mathcal{A}$ into itself such that
$$a,b\in\mathcal{A},~ab^*=0\Rightarrow a\delta(b)^*+\delta(a)b^*=0,$$
then $\delta(a)=\Delta(a)+\delta(1)a$ for every $a$ in $\mathcal A$,
where $\Delta$ is a $*$-derivation. In particular, $\delta$ is a $*$-derivation when $\delta(1)=0$.
\end{theorem}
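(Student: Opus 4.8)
The plan is to verify that $\Delta(a):=\delta(a)-\delta(1)a$ is a $*$-derivation; since $\mathcal A$ is unital this is exactly the asserted decomposition, and the last sentence is the special case $\delta(1)=0$. As a von Neumann algebra $\mathcal A$ is in particular a $C^*$-algebra, so the purely algebraic consequence of the hypothesis extracted in the proof of Theorem~\ref{++++5} is available: repeating the adjoint computation \eqref{812-01} verbatim gives
\begin{equation*}
a,b,c\in\mathcal A,\ ab=bc=0\ \Longrightarrow\ a\delta(b)c=0 .
\end{equation*}
Writing $\delta(b)=\Delta(b)+\delta(1)b$ and using $bc=0$, the same implication holds with $\delta$ replaced by $\Delta$, while now $\Delta(1)=0$. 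Everything thus reduces to proving that a linear map $\Delta$ on a von Neumann algebra with $\Delta(1)=0$ satisfying $ab=bc=0\Rightarrow a\Delta(b)c=0$ is a $*$-derivation, the crucial difference from Theorem~\ref{++++5} being that here \emph{no continuity} of $\delta$ is assumed.

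The engine of the argument is the abundance of projections. First I would record, for every projection $p$ with $q=1-p$, that feeding $b=p$ into the implication with $a=q\in\mathcal Aq$ and $c=q\in q\mathcal A$ forces $q\Delta(p)q=0$, and running the same computation with $q$ in place of $p$ together with $\Delta(q)=-\Delta(p)$ gives $p\Delta(p)p=0$; hence each $\Delta(p)$ is off-diagonal for the Peirce decomposition determined by $p$. Next I would use that a partial isometry $v$ between two orthogonal equivalent projections $p=v^*v$ and $q=vv^*$ satisfies $v+v^*,\ i(v-v^*)\in\mathfrak J(\mathcal A)$, each being the difference of the two projections $\tfrac12\bigl((p+q)\pm(v+v^*)\bigr)$ and $\tfrac12\bigl((p+q)\pm i(v-v^*)\bigr)$, so that $v\in\mathfrak J(\mathcal A)$. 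Consequently all matrix units, and more generally the whole $*$-subalgebra $\mathfrak J(\mathcal A)$ generated by the projections, are controlled by these orthogonality relations, and a standard Peirce-component bookkeeping yields the Leibniz identity $\Delta(xy)=x\Delta(y)+\Delta(x)y$ together with the symmetry $\Delta(x^*)=\Delta(x)^*$ for all $x,y\in\mathfrak J(\mathcal A)$.

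The main obstacle is the passage from $\mathfrak J(\mathcal A)$ to all of $\mathcal A$, precisely because a von Neumann algebra is only the weak$^*$-closure, not the algebraic linear span, of its projections, and $\delta$ is not assumed continuous, so finite spectral approximations of a general self-adjoint element cannot simply be pushed to the limit. To close this gap I would exploit that each value $\delta(a)$ is a genuine element of $\mathcal A$ and that projections separate $\mathcal A$: for fixed $x,y$ the defect $\phi(x,y)=\Delta(xy)-x\Delta(y)-\Delta(x)y$ can be tested, through suitable orthogonality relations, against arbitrarily fine families of spectral projections of $x$ and $y$, and the completeness together with the order-continuity of the projection lattice of $\mathcal A$ should force $p\,\phi(x,y)\,p'=0$ for enough projection pairs to conclude $\phi(x,y)=0$; the $*$-identity extends in the same way. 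I expect this extension step, rather than the projection computations of the second paragraph, to carry the real weight of the proof, since it is exactly here that the von Neumann structure must compensate for the absence of the topological zero-product-determined machinery used in Theorem~\ref{++++5}.
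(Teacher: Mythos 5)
Your reduction in the first paragraph contains a fatal loss of information, and the rest of the argument cannot recover from it. You replace the hypothesis $ab^*=0\Rightarrow a\delta(b)^*+\delta(a)b^*=0$ by the consequence $ab=bc=0\Rightarrow a\Delta(b)c=0$ and then claim that ``everything reduces'' to showing that a linear map $\Delta$ with $\Delta(1)=0$ satisfying this two-sided zero-product condition is a $*$-derivation. That target statement is false: \emph{every} derivation satisfies it, since $ab=bc=0$ and the Leibniz rule give $a\Delta(b)c=\bigl(\Delta(ab)-\Delta(a)b\bigr)c=-\Delta(a)bc=0$. In particular, on $\mathcal A=M_2(\mathbb C)$ the inner derivation $\Delta(x)=xm-mx$ with $m=\mathrm{diag}(1,0)$ satisfies your reduced hypothesis and $\Delta(1)=0$, yet it is not a $*$-derivation (one would need $m+m^*$ central). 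This also defeats your second paragraph: since $\mathfrak J(M_2(\mathbb C))=M_2(\mathbb C)$, no amount of Peirce bookkeeping from the reduced condition can yield the symmetry $\Delta(x^*)=\Delta(x)^*$ on $\mathfrak J(\mathcal A)$, because the counterexample lives there. The paper itself flags exactly this trap in Remark 2: condition $\mathbb H'$ (equivalently $\mathbb H$) together with $\delta(1)=0$ cannot imply that $\delta$ is a $*$-derivation. Any correct proof must keep working with the full condition $\mathbb D_1$, in which the adjoint couples the two factors; the passage \eqref{812-01} is used in Theorem \ref{++++5} only to extract the derivation part, while the $*$-property there is obtained by a separate argument that still invokes $\mathbb D_1$.

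For comparison, the paper's proof of Theorem \ref{new01} proceeds by a structure-theoretic case split that never discards the involution: (i) if $\mathcal A$ is abelian, it identifies $\mathcal A\cong C(X)$, observes $ab=0\Leftrightarrow ab^*=0$ there, deduces $ab=0\Rightarrow a\Delta(b)=0$, invokes an automatic-continuity result for such maps, and concludes $\Delta=0$; (ii) if $\mathcal A\cong M_n(\mathcal B)$ with $n\geq 2$, it uses that such algebras are zero product determined and applies the Ghahramani--Pan theorem, which takes the full condition $\mathbb D_1$ as input and needs no continuity; (iii) the general case is patched together from (i) and (ii) through the central decomposition $\mathcal A\cong\sum_i\oplus\,\mathcal A_i$, checking $\Delta(1_i)=0$ and $\Delta(\mathcal A_i)\subseteq\mathcal A_i$. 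Your third paragraph, which is where you yourself locate the real weight of the argument, is in any case only a statement of intent (``should force'', ``I expect'') rather than a proof; but the decisive defect is the reduction itself, which provably cannot produce the $*$-part of the conclusion.
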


\begin{proof}
Define a linear mapping $\Delta$ from $\mathcal{A}$
into $\mathcal{M}$ by
$$\Delta(a)=\delta(a)-\delta(1)a$$
for every $a$ in $\mathcal A$.
In the following we show that $\Delta$ is a $*$-derivation.
It is clear that $\Delta(1)=0$ and $ab^*=0$ can implies that $a\Delta(b)^*+\Delta(a)b^*=0$.

\textbf{Case 1}: Suppose that $\mathcal A$ is an abelian von Neumann algebra.
First we show that $\Delta$ satisfies that
$$a,b\in\mathcal{A},~ab=0\Rightarrow a\Delta(b)=0.$$

It is well known that
$\mathcal A\cong C(X)$, where $X$ is a compact Hausdorff space and $C(X)$ denotes
the $C^*$-algebra of all continuous complex-valued functions on $X$.
Thus we have that $ab=0$ if and only if $ab^*=0$ for each $a,b$ in $\mathcal{A}$. Indeed,
let $f$ and $g$ be two functions in $C(X)$ corresponding to $a$ and $b$, respectively,
we can obtain that
$$ab^*=0\Leftrightarrow f\cdot\bar{g}=0\Leftrightarrow f\cdot g=0\Leftrightarrow ab=0.$$
Let $a$ and $b$ be in $\mathcal A$ with $ab^*=ab=0$,
we have that $a\Delta(b)^*+\Delta(a)b^*=0$. Multiply $a$ from the left side
of above equation, we can obtain that $a^2\Delta(b)^*=0$.
Let $f$ and $h$ be two functions in $C(X)$ corresponding to $a$ and $\Delta(b)$,
then we have that
$$0=f^2\bar{g}=f^2g=fg.$$
It implies that $a\Delta(b)=0$. By \cite[Theorem 3]{Robert}, we know that $\Delta$
is continuous.
By \cite[Lemma 2.5]{hejun} and $\Delta(1)=0$, we know that $\Delta(a)=\Delta(1)a=0$ for every $a$ in $\mathcal A$.

\textbf{Case 2}: Suppose that $\mathcal A\cong M_n(\mathcal B)$,
where $\mathcal B$ is also a von Neumann algebra and $n\geqslant2$.
By \cite{M. Bresar1, M. Bresar 3} we know that
$\mathcal A$ is a zero product determined algebra. Thus by \cite[Theorem 3.1]{Hoger Ghahramani2}
it follows that $\Delta$ is a $*$-derivation.

\textbf{Case 3}: Suppose that $\mathcal A$ is a general von Neumann algebra.
It is well known that $\mathcal A\cong \sum_{i=1}^{n}\bigoplus\mathcal A_i$ ($n$ is a finite integer or infinite),
where each $\mathcal A_i$ coincides with either Case 1 or Case 2.
Denote the unit element of $\mathcal A_i$ by $1_i$ and the restriction of $\Delta$ in $\mathcal A_i$
by $\Delta_i$. Since $1_i(1-1_i)=0$ and $\Delta(1)=0$, we have that
$$1_i\Delta(1-1_i)^*+\Delta(1_i)(1-1_i)=0.$$
It follows that
\begin{align}
-1_i\Delta(1_i)^*+\Delta(1_i)-\Delta(1_i)1_i=0.                                 \label{new1}
\end{align}
Multiplying $1_i$ from the left side of \eqref{new1} and by $1_i\Delta(1_i)=\Delta(1_i)1_i$, we have that
$1_i\Delta(1_i)^*=0$. It implies that $\Delta(1_i)=0$.
For every $a$ in $\mathcal A$, we write $a=\sum_{i=1}^{n}a_i$
with $a_i$ in $\mathcal A_i$. Since $a_i(1-1_i)=0$, we have that
$\Delta(a_i)(1-1_i)=0$, which means that $\Delta(a_i)\in\mathcal A_i$.
Let $a_i, b_i$ be in $\mathcal A_i$ with
$a_ib_i^*=0$, we have that
$$\Delta(a_i)b_i^*+a_i\Delta(b_i)^*=\Delta_i(a_i)b_i^*+a_i\Delta_i(b_i)^*=0.$$
By Cases 1 and 2, we know that every $\Delta_i$ is a $*$-derivation. Thus $\Delta$ is a $*$-derivation.
\end{proof}

In the following, we characterize
a linear mapping $\delta$ satisfies the
condition $\mathbb{D}_{1}$ from a unital $\ast$-algebra into a unital
$\ast$-$\mathcal A$-bimodule with a right or left separating set $\mathcal{J}\subseteq\mathfrak{J}(\mathcal{A})$.

\begin{lemma} {\rm\cite[Theorem~4.1]{M. Bresar 3}}\label{01}
Suppose that $\mathcal A$ is a unital algebra and $\mathcal X$ is a linear space.
If $\phi$ is a bilinear mapping from $\mathcal{A}\times\mathcal{A}$ into $\mathcal{X}$ such that
$$a,b\in\mathcal{A},~ab=0\Rightarrow\phi(a,b)=0,$$
then we have that
$$\phi(a,x)=\phi(ax,1)~\mathrm{and}~\phi(x,a)=\phi(1,xa)$$
for every $a$ in $\mathcal{A}$ and every $x$ in $\mathfrak{J}(\mathcal{A})$.
\end{lemma}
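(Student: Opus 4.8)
The plan is to reduce the whole statement to a single ``peeling'' identity for one idempotent factor and then telescope. By the very definition of $\mathfrak{J}(\mathcal{A})$ as the subalgebra generated algebraically by the idempotents of $\mathcal{A}$, every $x\in\mathfrak{J}(\mathcal{A})$ is a linear combination of finite products $e_1e_2\cdots e_n$ of idempotents. Each asserted identity is linear in $x$ (the left-hand sides because $\phi$ is bilinear, the right-hand sides because $a\mapsto ax$ and $x\mapsto xa$ are linear and $\phi$ is bilinear), so it suffices to prove
\[
\phi(a,e_1\cdots e_n)=\phi(ae_1\cdots e_n,1)\quad\text{and}\quad\phi(e_1\cdots e_n,a)=\phi(1,e_1\cdots e_na)
\]
for every such product and every $a\in\mathcal{A}$, and then to sum over the linear combination representing a general $x$.

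The core step I would isolate first is the single-idempotent peeling identity
\[
\phi(a,ey)=\phi(ae,y)\quad\text{and}\quad\phi(ye,a)=\phi(y,ea),
\]
valid for every idempotent $e$ and all $a,y\in\mathcal{A}$. To prove the first, I would exploit the two orthogonality relations that come from $e^2=e$, namely $[a(1-e)](ey)=a(e-e^2)y=0$ and $(ae)[(1-e)y]=a(e-e^2)y=0$. The hypothesis on $\phi$ turns these into $\phi(a(1-e),ey)=0$ and $\phi(ae,(1-e)y)=0$; expanding by bilinearity gives $\phi(a,ey)=\phi(ae,ey)$ and $\phi(ae,ey)=\phi(ae,y)$, and chaining them yields $\phi(a,ey)=\phi(ae,y)$. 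The mirror identity follows in exactly the same way from the symmetric relations $(ye)[(1-e)a]=0$ and $[y(1-e)](ea)=0$.

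With the peeling identity in hand, the products are handled by a direct telescoping induction on the word length. For the first assertion I would move one idempotent at a time out of the right slot and into the left slot:
\[
\phi(a,e_1\cdots e_n)=\phi(ae_1,e_2\cdots e_n)=\cdots=\phi(ae_1\cdots e_{n-1},e_n)=\phi(ae_1\cdots e_n,1),
\]
where the last equality is the peeling identity applied with $y=1$. The second assertion is proved symmetrically, peeling idempotents off the left of $x$ into the right slot of $\phi$ and ending at $\phi(1,e_1\cdots e_na)$. Passing to linear combinations then delivers both stated identities for every $x\in\mathfrak{J}(\mathcal{A})$.

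I expect the only genuine obstacle to be discovering the peeling identity, and in particular choosing the correct orthogonal factors $a(1-e)$ and $(1-e)y$ (together with their mirrors) so that the relevant products vanish and the zero-product hypothesis on $\phi$ can be invoked. Once that identity is established, the reduction to words in idempotents, the telescoping, and the final passage by linearity are entirely routine bookkeeping.
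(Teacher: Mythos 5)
Your proof is correct, and it is essentially the argument behind the cited source: the paper itself does not prove this lemma but quotes it from Bre\v{s}ar \cite[Theorem 4.1]{M. Bresar 3}, whose proof rests on exactly your peeling identity $\phi(a,ey)=\phi(ae,ey)=\phi(ae,y)$ (obtained from the orthogonal pairs $a(1-e)\cdot ey=0$ and $ae\cdot(1-e)y=0$), followed by induction on the length of a word in idempotents and linearity. All of your steps check out; the only blemish is a wording slip in the second telescoping: the mirror identity $\phi(ye,a)=\phi(y,ea)$ peels the \emph{rightmost} factor of $x=e_1\cdots e_n$ into the second slot (first $e_n$, then $e_{n-1}$, and so on), not the leftmost --- which is what your displayed endpoint $\phi(1,e_1\cdots e_n a)$ in fact requires, so the mathematics is unaffected.
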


\begin{theorem}\label{11}
Suppose that $\mathcal A$ is a unital $\ast$-algebra and $\mathcal M$ is a unital
$\ast$-$\mathcal A$-bimodule with a right or left separating set $\mathcal{J}\subseteq\mathfrak{J}(\mathcal{A})$.
If $\delta$ is a linear mapping from $\mathcal{A}$ into $\mathcal{M}$ such that
$$a,b\in\mathcal{A},~ab^*=0\Rightarrow a\delta(b)^*+\delta(a)b^*=0$$
then $\delta(a)=\Delta(a)+\delta(1)a$ for every $a$ in $\mathcal A$,
where $\Delta$ is a $*$-derivation. In particular, $\delta$ is a $*$-derivation when $\delta(1)=0$.
\end{theorem}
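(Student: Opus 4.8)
The plan is to extract the purely algebraic content of Lemma~\ref{01} and then promote the one-sided information it gives to two-sided statements using the separating ideal $\mathcal J$; no continuity is needed. Writing $\hat\delta(a)=\delta(a^*)^*$ and replacing $b$ by $b^*$, the hypothesis reads $ab=0\Rightarrow a\hat\delta(b)+\delta(a)b=0$, so the bilinear map $\phi(a,b)=a\hat\delta(b)+\delta(a)b$ from $\mathcal A\times\mathcal A$ into $\mathcal M$ vanishes whenever $ab=0$. Since $\mathcal A$ is unital, Lemma~\ref{01} yields $\phi(a,x)=\phi(ax,1)$ and $\phi(x,a)=\phi(1,xa)$ for all $a\in\mathcal A$ and $x\in\mathfrak J(\mathcal A)$. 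I set $\Delta(a)=\delta(a)-\delta(1)a$ and $\hat\Delta(a)=\Delta(a^*)^*=\hat\delta(a)-a\delta(1)^*$; then $\Delta(1)=\hat\Delta(1)=0$, and since $\Delta(a^*)=\Delta(a)^*$ is equivalent to $\hat\Delta(a)=\Delta(a)$, it suffices to prove that $\Delta$ is a derivation and that $\Delta=\hat\Delta$.

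First I would unwind the two identities. Putting $a=1$ in $\phi(a,x)=\phi(ax,1)$ and using $\hat\delta(1)=\delta(1)^*$ gives $\hat\Delta(x)=\Delta(x)$ for every $x\in\mathfrak J(\mathcal A)$; substituting this back into the general identity collapses it to the one-sided derivation rule $\Delta(ax)=\Delta(a)x+a\Delta(x)$ for all $a\in\mathcal A$, $x\in\mathfrak J(\mathcal A)$. Treating $\phi(x,a)=\phi(1,xa)$ the same way produces the mirror rule $\hat\Delta(xa)=x\hat\Delta(a)+\hat\Delta(x)a$. Thus on $\mathfrak J(\mathcal A)$ the maps $\Delta$ and $\hat\Delta$ coincide and each obeys the Leibniz rule, but only on the side carrying the $\mathfrak J(\mathcal A)$-factor.

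Now suppose $\mathcal J$ is a left separating set. Because $\mathcal J$ is an ideal inside $\mathfrak J(\mathcal A)$, for $x\in\mathcal J$ both $x$ and $bx$ lie in $\mathfrak J(\mathcal A)$, so I may expand $\Delta(ab)x$, $\Delta(a)bx$ and $a\Delta(b)x$ by the right-hand rule; every term cancels, giving $[\Delta(ab)-\Delta(a)b-a\Delta(b)]x=0$ for all $x\in\mathcal J$, whence $\Delta$ is a genuine derivation. For the $*$-identity the key move is to write $\hat\Delta(a)x=(x^*\Delta(a^*))^*$, expand $x^*\Delta(a^*)$ by the now two-sided Leibniz rule, and use $\hat\Delta=\Delta$ on $\mathfrak J(\mathcal A)$ (legitimate since $x,ax\in\mathcal J\subseteq\mathfrak J(\mathcal A)$); this collapses to $\hat\Delta(a)x=\Delta(a)x$, so $[\Delta(a)-\hat\Delta(a)]x=0$ for all $x\in\mathcal J$ and the left separating property forces $\Delta=\hat\Delta$.

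The right separating case is the mirror image under the interchange $\Delta\leftrightarrow\hat\Delta$ and left $\leftrightarrow$ right: multiplying on the left by $x\in\mathcal J$ and using the left-hand rule for $\hat\Delta$ shows first that $\hat\Delta$ is a derivation and then that $\Delta=\hat\Delta$, which together say exactly that $\Delta$ is a $*$-derivation. In both cases $\delta(a)=\Delta(a)+\delta(1)a$ holds by definition, and $\delta$ is a $*$-derivation when $\delta(1)=0$. The main obstacle is precisely the one-sidedness of the data coming from Lemma~\ref{01}: the derivation identity it gives for $\Delta$ lives on the right and the one for $\hat\Delta$ on the left, while the $*$-condition $\Delta=\hat\Delta$ couples the two sides. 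The way around it is to first use the separating ideal to upgrade the relevant map to a genuine (two-sided) derivation, and only afterwards to derive the $*$-identity, routing the right multiplication through the involution via $\hat\Delta(a)x=(x^*\Delta(a^*))^*$ so that the available two-sided Leibniz rule and the agreement $\Delta=\hat\Delta$ on $\mathfrak J(\mathcal A)$ can be brought to bear.
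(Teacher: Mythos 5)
Your proposal is correct and follows essentially the same route as the paper: the same reduction $\Delta(a)=\delta(a)-\delta(1)a$, the same application of Lemma~\ref{01} to the bilinear map $\phi(a,b)=a\Delta(b^*)^*+\Delta(a)b$ (your $\hat\delta$/$\hat\Delta$ bookkeeping is just a repackaging of this), the same upgrade from the one-sided Leibniz rule on $\mathfrak{J}(\mathcal{A})$ to a genuine derivation via the separating ideal, and the same final use of the separating property to force $\Delta(a^*)=\Delta(a)^*$. The only cosmetic difference is that the paper disposes of the right-separating case at the outset by replacing $\mathcal J$ with $\mathcal J^*$, whereas you run the mirrored argument explicitly.
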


\begin{proof}
Since $\mathcal A$ is a unital $\ast$-algebra and $\mathcal M$ is a unital
$\ast$-$\mathcal A$-bimodule, we know that $\mathcal{J}\subseteq\mathfrak{J}(\mathcal{A})$ is a right separating set of $\mathcal{M}$
if and only if $\mathcal{J}^*=\{x^*:x\in\mathcal J\}\subseteq\mathfrak{J}(\mathcal{A})$ is a left separating set of $\mathcal{M}$.
Thus without loss of generality, we can assume that $\mathcal J$ is a left separating set of $\mathcal A$,
otherwise, we replace $\mathcal J$ by $\mathcal{J}^*$.

Define a linear mapping $\Delta$ from $\mathcal{A}$
into $\mathcal{M}$ by
$$\Delta(a)=\delta(a)-\delta(1)a$$
for every $a$ in $\mathcal A$.
In the following we show that $\Delta$ is a $*$-derivation.

It is clear that $\Delta(1)=0$ and $ab^*=0$ can implies that $a\Delta(b)^*+\Delta(a)b^*=0$.
Define a bilinear mapping $\phi$ from $\mathcal{A}\times\mathcal{A}$
into $\mathcal{M}$ by
$$\phi(a,b)=a\Delta(b^*)^*+\Delta(a)b$$
for each $a$ and $b$ in $\mathcal{A}$.
By the assumption we know that $ab=0$ implies $\phi(a,b)=0$.

Let  $a$, $b$ be in $\mathcal{A}$ and $x$ be in $\mathcal{J}$.
By Lemma \ref{01}, we can obtain that
$$\phi(x,1)=\phi(1,x)~\mathrm{and}~\phi(a,x)=\phi(ax,1).$$
Hence we have the following two identities:
\begin{align}
x\Delta(1)^*+\Delta(x)=\Delta(x^*)^*+\Delta(1)x                                           \label{1.1}
\end{align}
and
\begin{align}
a\Delta(x^*)^*+\Delta(a)x=ax\Delta(1)^*+\Delta(ax).                                         \label{1.2}
\end{align}
By \eqref{1.1} and $\Delta(1)=0$, we know that $\Delta(x)^*=\Delta(x^*)$.
Thus by \eqref{1.2}, it implies that
$$\Delta(ax)=a\Delta(x)+\Delta(a)x.$$
Similar to the proof of \cite[Theorem 2.3]{anguangyu}, we can obtain that
$\Delta(ab)=a\Delta(b)+\Delta(a)b$
for each $a$ and $b$ in $\mathcal A$.

It remains to show that $\Delta(a)^*=\Delta(a^*)$
for every $a$ in $\mathcal A$.
Indeed, for every $a$ in $\mathcal{A}$ and every $x$ in $\mathcal{J}$, we have that
$\Delta(ax)^*=\Delta((ax)^*)$. It implies that
$$(\Delta(a)x+a\Delta(x))^*=\Delta(x^*)a^*+x^*\Delta(a^*).$$
Thus we can obtain that
$x^*(\Delta(a)^*-\Delta(a^*))=0$, hence $(\Delta(a)-\Delta(a^*)^*)x=0$.
It follows that $\Delta(a)^*=\Delta(a^*)$ for every $a$ in $\mathcal A$.
\end{proof}

\textbf{Remark 1}. Let $\mathcal A$ be a $*$-algebra, $\mathcal M$ be a $*$-$\mathcal A$-bimodule,
and $\delta$ is a linear mapping from $\mathcal{A}$ into $\mathcal{M}$.
Similar to the condition $\mathbb{D}_{1}$ which we have characterized in Section 2:
$$(\mathbb{D}_{1})~a,b\in\mathcal{A},~ab^*=0\Rightarrow a\delta(b)^*+\delta(a)b^*=0,$$
we can consider the condition $\mathbb{D}_{1}'$
$$(\mathbb{D}_{1}')~a,b\in\mathcal{A},~a^*b=0\Rightarrow a^*\delta(b)+\delta(a)^*b=0.$$
Through the minor modifications, we can obtain the corresponding results.

\textbf{Remark 2}. A linear mapping $\delta$ from $\mathcal{A}$ into
$\mathcal{M}$ is called a \emph{local derivation} if for every $a$ in $\mathcal{A}$,
there exists a derivation $\delta_{a}$ (depending on $a$) from $\mathcal{A}$ into $\mathcal{M}$
such that $\delta(a)=\delta_{a}(a)$.
It is clear that every local derivation satisfies
the following condition:
\begin{align*}
(\mathbb{H})~a,b,c\in\mathcal{A},~ab=bc=0\Rightarrow a\delta(b)c=0.
\end{align*}
In \cite{J. Alaminos}, the authors
prove that every continuous linear mapping from a unital $C^*$-algebra into its unital Banach bimodule
such that the condition $\mathbb{H}$ and $\delta(1)=0$ is a derivation.

Let $\mathcal{A}$ be a $*$-algebra and $\mathcal{M}$ be a $*$-$\mathcal A$-bimodule.
The natural way to translate the condition $\mathbb{H}$
to the context of $*$-derivations is to consider the following condition
\begin{align*}
(\mathbb{H}')~a,b,c\in\mathcal{A},~ab^*=b^*c=0\Rightarrow a\delta(b)^*c=0.
\end{align*}
However, the conditions $\mathbb{H}'$ and $\mathbb{H}$
are equivalent. Indeed, if condition $\mathbb{H}'$ holds, we have that
\begin{align*}
a,b,c\in\mathcal{A},~ab=bc=0\Rightarrow c^*b^*=b^*a^*=0\Rightarrow c^*\delta(b)^*a^*=0\Rightarrow a\delta(b)c=0,
\end{align*}
and if the condition $\mathbb{H}$ holds, we have that
\begin{align*}
a,b,c\in\mathcal{A},~ab^*=b^*c=0\Rightarrow c^*b=ba^*=0\Rightarrow c^*\delta(b)a^*=0\Rightarrow a\delta(b)^*c=0.
\end{align*}
It means that the condition $\mathbb{H}'$
and $\delta(1)=0$
can not implies that $\delta$ is a $*$-derivation.

\section{$*$-Jordan derivations on some algebras}

A (Banach) algebra $\mathcal{A}$ is said to be \emph{zero Jordan product determined}
if every (continuous) bilinear mapping $\phi$ from $\mathcal{A}\times\mathcal{A}$
into any (Banach) linear space $\mathcal{X}$ satisfying
$$\phi(a, b)=0,~\mathrm{ whenever } \  a\circ b = 0$$
can be written as $\phi(a, b)= T(a\circ b)$, for some (continuous) linear mapping $T$
from $\mathcal{A}$ into $\mathcal X$.
In \cite{Guangyu. An}, we show that if $\mathcal A$ is a unital algebra with $\mathcal A=\mathfrak{J}(\mathcal{A})$,
then $\mathcal A$ is a zero Jordan product determined algebra.

\begin{theorem}\label{+++8}
Suppose that $\mathcal A$ is a unital zero Jordan product determined $*$-algebra,
and $\mathcal M$ is a unital $*$-$\mathcal A$-bimodule.
If $\delta$ is a linear mapping from $\mathcal A$
into $\mathcal M$ such that
$$a,b\in\mathcal{A},~a\circ b^*=0\Rightarrow a\circ \delta(b)^*+\delta(a)\circ b^*=0~\mathrm{and}~\delta(1)a=a\delta(1),$$
then $\delta(a)=\Delta(a)+\delta(1)a$ for every $a$ in $\mathcal A$,
where $\Delta$ is a $*$-Jordan derivation. In particular, $\delta$ is a $*$-Jordan derivation when $\delta(1)=0$.
\end{theorem}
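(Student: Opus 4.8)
The plan is to follow the template of Theorem~\ref{++++5}, replacing the associative product by the Jordan product throughout. First I would define the linear mapping $\Delta$ from $\mathcal A$ into $\mathcal M$ by $\Delta(a)=\delta(a)-\delta(1)a$, so that $\Delta(1)=0$, and reduce the problem to showing that $\Delta$ is a $*$-Jordan derivation; the formula $\delta(a)=\Delta(a)+\delta(1)a$ then gives the stated conclusion, and the special case $\delta(1)=0$ is immediate. Note that taking $b=0$ in the hypothesis shows $\delta(1)a=a\delta(1)$ holds for every $a$, so $\delta(1)$ is central, and applying the involution gives $c\delta(1)^*=\delta(1)^*c$ for every $c$, so $\delta(1)^*$ is central as well; these facts will be needed repeatedly.

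The first genuine step is to check that $\Delta$ inherits the orthogonality hypothesis: that $a\circ b^*=0$ implies $a\circ\Delta(b)^*+\Delta(a)\circ b^*=0$. Expanding $a\circ\Delta(b)^*$ and $\Delta(a)\circ b^*$ via $\Delta(a)=\delta(a)-\delta(1)a$ and $\Delta(b)^*=\delta(b)^*-b^*\delta(1)^*$, the terms built from $\delta$ combine to $a\circ\delta(b)^*+\delta(a)\circ b^*$, which vanishes by the hypothesis on $\delta$. The leftover terms are $-ab^*\delta(1)^*-b^*\delta(1)^*a-\delta(1)ab^*-b^*\delta(1)a$. Using the centrality of $\delta(1)^*$ to rewrite $b^*\delta(1)^*a=b^*a\delta(1)^*$, the first two regroup as $-(ab^*+b^*a)\delta(1)^*=-(a\circ b^*)\delta(1)^*=0$; similarly, using the centrality of $\delta(1)$ to rewrite $b^*\delta(1)a=\delta(1)b^*a$, the last two regroup as $-\delta(1)(ab^*+b^*a)=-\delta(1)(a\circ b^*)=0$. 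This computation is the main obstacle: it is where the centrality assumption on $\delta(1)$ is consumed, and one must handle the module involution $(ma)^*=a^*m^*$, $(am)^*=m^*a^*$ carefully.

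With $\Delta$ satisfying the orthogonality condition and $\Delta(1)=0$, I would then invoke the zero Jordan product determined property. Define a bilinear mapping $\phi$ from $\mathcal A\times\mathcal A$ into $\mathcal M$ by $\phi(a,b)=a\circ\Delta(b^*)^*+\Delta(a)\circ b$. If $a\circ b=0$, then applying the orthogonality condition for $\Delta$ to the pair $(a,b^*)$ (recall $(b^*)^*=b$) yields $\phi(a,b)=0$. Since $\mathcal A$ is zero Jordan product determined, there exists a linear mapping $T$ from $\mathcal A$ into $\mathcal M$ with $\phi(a,b)=T(a\circ b)$ for all $a,b$ in $\mathcal A$.

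Finally I would extract the two defining properties by specialization, using $1^*=1$ and the unitality of $\mathcal M$. Putting $b=1$ and using $\Delta(1)=0$ gives $2\Delta(a)=\phi(a,1)=T(a\circ 1)=2T(a)$, so $T=\Delta$, whence $a\circ\Delta(b^*)^*+\Delta(a)\circ b=\Delta(a\circ b)$ for all $a,b$. Putting $a=1$ and using $\Delta(1)=0$ gives $2\Delta(b^*)^*=\Delta(1\circ b)=2\Delta(b)$, that is $\Delta(b^*)=\Delta(b)^*$, which is the $*$-property. Substituting $\Delta(b^*)^*=\Delta(b)$ back into the identity for $T=\Delta$ yields $\Delta(a\circ b)=a\circ\Delta(b)+\Delta(a)\circ b$, so $\Delta$ is a $*$-Jordan derivation, completing the proof.
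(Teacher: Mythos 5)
Your proposal is correct and takes essentially the same route as the paper's own proof: define $\Delta(a)=\delta(a)-\delta(1)a$, check that $\Delta(1)=0$ and that $\Delta$ inherits the orthogonality condition, feed the bilinear map $\phi(a,b)=a\circ\Delta(b^*)^*+\Delta(a)\circ b$ into the zero Jordan product determined property to get $\phi(a,b)=T(a\circ b)$, and then specialize at $b=1$ and $a=1$ to conclude $T=\Delta$ and $\Delta(b^*)=\Delta(b)^*$, whence $\Delta$ is a $*$-Jordan derivation. The only difference is cosmetic: you write out in full the centrality computation (using both $\delta(1)$ and $\delta(1)^*$ central) showing that $\Delta$ satisfies the hypothesis, a step the paper merely asserts.
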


\begin{proof}
Define a linear mapping $\Delta$ from $\mathcal{A}$
into $\mathcal{M}$ by
$\Delta(a)=\delta(a)-\delta(1)a$ for every $a$ in $\mathcal A$.
It is sufficient to show that $\Delta$ is a $*$-Jordan derivation.

It is clear that $\Delta(1)=0$, and by $\delta(1)a=a\delta(1)$ we have that
$$a,b\in\mathcal{A},~a\circ b^*=0\Rightarrow a\circ \Delta(b)^*+\Delta(a)\circ b^*=0.$$

Define a bilinear mapping from $\mathcal{A}\times\mathcal A$
into $\mathcal{M}$ by
$$\phi(a,b)=a\circ \Delta(b^*)^*+\Delta(a)\circ b.$$
Thus $a\circ b=0$ implies $\phi(a,b)=0$.
Since $\mathcal A$ is a zero Jordan product determined algebra, we know that
there exists a linear mapping $T$ from $\mathcal A$ into $\mathcal M$ such that
\begin{align}
T(a\circ b)=\phi(a,b)=a\circ \Delta(b^*)^*+\Delta(a)\circ b                                     \label{+++3}
\end{align}
for each $a,b$ in $\mathcal A$.
Let $a=1$ and $b=1$ be in \eqref{+++3}, respectively. By $\Delta(1)=0$, we can obtain that
$$T(a)=\Delta(a)~\mathrm{and}~T(b)=\Delta(b^*)^*.$$
It follows that
$\Delta(a^*)=\Delta(a)^*$ for every $a$ in $\mathcal A$. By \eqref{+++3}, we have that
$$T(a\circ b)=\Delta(a\circ b)=\phi(a,b)=a\circ\Delta(b)+\Delta(a)\circ b.$$
It means that $\Delta$ is a $*$-Jordan derivation.
\end{proof}

In \cite{Guangyu. An}, we  prove that
the matrix algebra $M_n(\mathcal B)(n\geq2)$ is zero Jordan product
determined, where $\mathcal B$ is a unital algebra.
In \cite{Hoger Ghahramani3}, H. Ghahramani show that every Jordan derivation from
$M_n(\mathcal B)(n\geq2)$ into its unital bimodule $\mathcal M$ is a derivation.
Hence we have the following result.

\begin{corollary}\label{06}
Suppose that $\mathcal B$ is a unital $*$-algebra, $M_n(\mathcal B)$ is a matrix algebra with $n\geq2$,
and $\mathcal M$ is a unital $*$-$M_n(\mathcal B)$-bimodule.
If $\delta$ is a linear mapping from $M_n(\mathcal B)$
into $\mathcal M$ such that
$$a,b\in M_n(\mathcal B),~a\circ b^*=0\Rightarrow a\circ \delta(b)^*+\delta(a)\circ b^*=0~\mathrm{and}~\delta(1)a=a\delta(1),$$
then $\delta(a)=\Delta(a)+\delta(1)a$ for every $a$ in $M_n(\mathcal B)$,
where $\Delta$ is a $*$-derivation. In particular, $\delta$ is a $*$-derivation when $\delta(1)=0$.
\end{corollary}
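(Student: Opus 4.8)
The plan is to deduce this corollary directly from Theorem \ref{+++8} and then upgrade the resulting Jordan-type derivation to a genuine derivation using the cited structure theorem for matrix algebras. Since $\mathcal B$ is a unital $*$-algebra, the matrix algebra $M_n(\mathcal B)$ carries the natural involution $(a_{ij})^*=(a_{ji}^*)$, which makes it a unital $*$-algebra, and $\mathcal M$ is assumed to be a unital $*$-$M_n(\mathcal B)$-bimodule. As recorded in the paragraph preceding the statement, the result of \cite{Guangyu. An} says that $M_n(\mathcal B)$ with $n\geq 2$ is zero Jordan product determined. Thus every hypothesis of Theorem \ref{+++8} is met for $\mathcal A=M_n(\mathcal B)$.

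First I would apply Theorem \ref{+++8} verbatim. Because $\delta$ satisfies
$$a,b\in M_n(\mathcal B),~a\circ b^*=0\Rightarrow a\circ\delta(b)^*+\delta(a)\circ b^*=0~\mathrm{and}~\delta(1)a=a\delta(1),$$
the theorem yields a $*$-Jordan derivation $\Delta$ from $M_n(\mathcal B)$ into $\mathcal M$ with $\delta(a)=\Delta(a)+\delta(1)a$ for every $a$. In particular $\Delta$ satisfies $\Delta(a^*)=\Delta(a)^*$ and the Jordan identity $\Delta(a\circ b)=a\circ\Delta(b)+\Delta(a)\circ b$.

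The only remaining point is to strengthen ``Jordan derivation'' to ``derivation.'' Here I would invoke \cite{Hoger Ghahramani3}, which guarantees that every Jordan derivation from $M_n(\mathcal B)$ (with $n\geq 2$) into its unital bimodule $\mathcal M$ is in fact a derivation. Applying this to the Jordan derivation underlying $\Delta$ shows that $\Delta$ is a derivation; since it already preserves the involution, $\Delta$ is a $*$-derivation. This gives the claimed decomposition $\delta(a)=\Delta(a)+\delta(1)a$, and the final assertion is immediate: when $\delta(1)=0$ the second term vanishes and $\delta=\Delta$ is itself a $*$-derivation. There is essentially no computational obstacle in this argument; the content lies entirely in lining up the hypotheses so that Theorem \ref{+++8} and the two cited matrix-algebra results can be chained, the mild point to check being that the natural involution and bimodule actions on $M_n(\mathcal B)$ are exactly those required by both external inputs.
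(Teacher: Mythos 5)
Your proposal is correct and follows exactly the route the paper intends: apply Theorem \ref{+++8} (using the fact from \cite{Guangyu. An} that $M_n(\mathcal B)$, $n\geq 2$, is zero Jordan product determined) to obtain the $*$-Jordan derivation $\Delta$, then upgrade $\Delta$ to a derivation via \cite{Hoger Ghahramani3}, preserving the $*$-condition. This matches the paper's argument, which it leaves implicit in the paragraph preceding the corollary.
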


Let $\mathcal H$ be a complex Hilbert space
and $B(\mathcal H)$ be the algebra of all bounded linear operators on $\mathcal H$.
Suppose that $\mathcal A$ is a von Neumann algebra on
$\mathcal{H}$ and $LS(\mathcal A)$ the set of all locally measurable operators
affiliated with the von Neumann algebra $\mathcal A$.

In \cite{M. Muratov}, M. Muratov and V. Chilin prove that $LS(\mathcal A)$
is a unital $*$-algebra and $\mathcal A\subset LS(\mathcal A)$.
By \cite[Proposition 21.20, Exercise 21.18]{T. Lam},
we know that if $\mathcal{A}$ is a von Neumann algebra without direct summand of type $\mathrm{I}_1$,
and $\mathcal B$ is a $*$-algebra with $\mathcal A\subseteq\mathcal B\subseteq LS(\mathcal A)$, then
$\mathcal B\cong \sum_{i=1}^{k}\bigoplus M_{n_i}(\mathcal B_i)$ ($k$ is a finite integer or infinite),
where $\mathcal B_i$ is a unital algebra.
By Theorem \ref{+++8}, we have the following result.

\begin{corollary}\label{110}
Suppose that $\mathcal{A}$ is a von Neumann algebra without direct summand of type $\mathrm{I}_1$,
and $\mathcal B$ is a $*$-algebra with $\mathcal A\subseteq\mathcal B\subseteq LS(\mathcal A)$.
If $\delta$ is a linear mapping
from $\mathcal B$ into $LS(\mathcal A)$ such that
$$a,b\in\mathcal B,~a\circ b^*=0\Rightarrow a\circ \delta(b)^*+\delta(a)\circ b^*=0~\mathrm{and}~\delta(1)a=a\delta(1),$$
then $\delta(a)=\Delta(a)+\delta(1)a$ for every $a$ in $\mathcal B$,
where $\Delta$ is a $*$-Jordan derivation. In particular, $\delta$ is a $*$-Jordan derivation when $\delta(1)=0$.
\end{corollary}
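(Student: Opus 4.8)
The plan is to deduce the statement directly from Theorem~\ref{+++8}, applied with $\mathcal B$ playing the role of the algebra and $LS(\mathcal A)$ playing the role of the bimodule. Observe that the orthogonality condition imposed on $\delta$ here is \emph{verbatim} the hypothesis of Theorem~\ref{+++8}: the implication $a\circ b^*=0\Rightarrow a\circ\delta(b)^*+\delta(a)\circ b^*=0$ together with the centrality relation $\delta(1)a=a\delta(1)$. Consequently, once we know that $\mathcal B$ is a unital zero Jordan product determined $*$-algebra and that $LS(\mathcal A)$ is a unital $*$-$\mathcal B$-bimodule, Theorem~\ref{+++8} immediately produces the decomposition $\delta(a)=\Delta(a)+\delta(1)a$ with $\Delta$ a $*$-Jordan derivation, and gives $\delta=\Delta$ when $\delta(1)=0$. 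Thus the entire argument reduces to verifying these two structural facts about the pair $(\mathcal B,LS(\mathcal A))$.

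The bimodule half is straightforward. Since $\mathcal A$ is a von Neumann algebra it is unital, and by \cite{M. Muratov} $LS(\mathcal A)$ is a unital $*$-algebra sharing the identity $1$ of $\mathcal A$; as $1\in\mathcal A\subseteq\mathcal B$, the algebra $\mathcal B$ is unital, and $LS(\mathcal A)$ becomes a unital $*$-$\mathcal B$-bimodule under its own multiplication and involution. For the zero Jordan product determined property I would invoke the structure theorem \cite{T. Lam}: the absence of a type $\mathrm I_1$ direct summand forces $\mathcal B\cong\sum_{i=1}^{k}\bigoplus M_{n_i}(\mathcal B_i)$ with each $\mathcal B_i$ unital and, crucially, every $n_i\geq2$. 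Each matrix block $M_{n_i}(\mathcal B_i)$ is zero Jordan product determined by \cite{Guangyu. An}, and in fact satisfies $M_{n_i}(\mathcal B_i)=\mathfrak J(M_{n_i}(\mathcal B_i))$: from the matrix units one checks that $1\otimes e^{(i)}_{11}+b\otimes e^{(i)}_{21}$ is idempotent for every $b\in\mathcal B_i$, so $b\otimes e^{(i)}_{21}\in\mathfrak J$, and multiplying by the idempotent-generated $1\otimes e^{(i)}_{12}$ recovers $b\otimes e^{(i)}_{11}$, and likewise every matrix entry. The remaining task is to assemble these blocks into the global statement $\mathcal B=\mathfrak J(\mathcal B)$ and then appeal to the criterion of \cite{Guangyu. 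An} that a unital algebra with $\mathcal B=\mathfrak J(\mathcal B)$ is zero Jordan product determined.

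I expect the genuine obstacle to be exactly this assembling step when $k$ is infinite. Zero Jordan product determinedness does not pass automatically to an infinite direct sum, since a bilinear map on $\sum^{\oplus}M_{n_i}(\mathcal B_i)$ cannot be expanded coordinatewise: an element of infinite support does not split as a finite sum of its block components, so the naive patching of the block maps $T_i$ is invalid. The way forward is to establish $\mathcal B=\mathfrak J(\mathcal B)$ by an idempotent decomposition that is \emph{uniform} across the blocks, exploiting that $n_i\geq2$ makes the first two coordinates available in every summand; the global idempotents $\sum_i 1\otimes e^{(i)}_{11}$ and $\sum_i 1\otimes e^{(i)}_{12}$, together with column idempotents of the form $1\otimes e^{(i)}_{ll}+\sum_{k\neq l}a^{(i)}_{kl}\otimes e^{(i)}_{kl}$, are the natural building blocks. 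The hard point, and the one I would spend the most care on, is to express an arbitrary $a=(a_i)$ as a \emph{finite} combination of such idempotents with a count controlled independently of the unbounded dimensions $n_i$; it is here, rather than in any of the preceding formal deductions, that the argument must do real work before Theorem~\ref{+++8} can be applied.
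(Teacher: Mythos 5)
Your route is exactly the paper's route, and it is worth saying plainly that the paper's entire proof of this corollary is the one\--line deduction in your first paragraph: the remarks preceding the statement record that $LS(\mathcal A)$ is a unital $*$-algebra containing $\mathcal A$ \cite{M. Muratov} and that $\mathcal B\cong\sum_{i=1}^{k}\bigoplus M_{n_i}(\mathcal B_i)$ with each $n_i\geq 2$ \cite{T. Lam}, and the proof then consists of the words ``By Theorem~\ref{+++8}.'' Consequently, the obstacle you isolate in your last paragraph --- that zero Jordan product determinedness, or the property $\mathcal B=\mathfrak{J}(\mathcal B)$, does not pass automatically to a direct sum with infinitely many blocks of unbounded size $n_i$, since an element of infinite support is not a finite sum of its block components --- is not something the paper resolves: it is passed over in silence, the cited result \cite{Guangyu. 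An} being applied, in effect, only blockwise. So you have not diverged from the paper; you have located a gap in it, and your proposal is at least as complete as the source. What keeps your write-up from being a proof is that you leave that very step open, and the building blocks you name will not close it as stated: column idempotents cost $n_i$ terms in the $i$-th block, and $\sum_i 1\otimes e^{(i)}_{12}$ is square-zero, not idempotent.

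Two ways to finish. (a) Prove a \emph{uniform} decomposition: for every $n\geq2$ split the indices into two sets of sizes $\lceil n/2\rceil\geq\lfloor n/2\rfloor\geq1$, viewing $M_n(\mathcal B_i)$ as a $2\times2$ block algebra. Each off-diagonal block element $v$ is square-zero and admits an idempotent $e$ (a diagonal block identity) with $ev=v$, $ve=0$, so $v=(e+v)-e$ is a difference of two idempotents; and each diagonal-corner element $a$ can be written as $m_1n_1+m_2n_2$ with $m_j,n_j$ in the off-diagonal blocks (take $m_1$ to be the first $\lfloor n/2\rfloor$ columns of $a$ and $n_1$ a coordinate selector, and handle the at most one remaining column by $m_2n_2$). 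Expanding, every element of every $M_{n_i}(\mathcal B_i)$ is a sum of at most $20$ signed products of at most two idempotents, where the count, the signs, and the pattern of the formula do not depend on the element or on $n_i$; this canonical shape therefore assembles across all (possibly infinitely many) central blocks into finitely many global idempotents of $\mathcal B$, giving $\mathcal B=\mathfrak{J}(\mathcal B)$ and hence the zero Jordan product determined property by \cite{Guangyu. An}, after which Theorem~\ref{+++8} applies as you intend. (b) Avoid the issue altogether: argue block-by-block as in Case 3 of Theorem~\ref{new01}, which is what the paper itself does for Corollary~\ref{+++9}: with the central projections $z_i$, show $\Delta(z_i)=0$ and $\Delta(z_i\mathcal B)\subseteq z_iLS(\mathcal A)$, apply Theorem~\ref{+++8} to each block $z_i\mathcal B\cong M_{n_i}(\mathcal B_i)$ separately (so only single-block zero Jordan product determinedness is ever used), and reassemble the resulting $*$-Jordan derivations componentwise.
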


For von Neumann algebras, by Corollary \ref{06} and
similar to the proof of Theorem \ref{new01}, we can easily obtain the following result
and we omit the proof.

\begin{corollary}\label{+++9}
Suppose that $\mathcal{A}$ is a von Neumann algebra. If $\delta$ is a linear mapping
from $\mathcal{A}$ into itself with such that
$$a,b\in\mathcal{A},~a\circ b^*=0\Rightarrow a\circ \delta(b)^*+\delta(a)\circ b^*=0~\mathrm{and}~\delta(1)a=a\delta(1),$$
then $\delta(a)=\Delta(a)+\delta(1)a$ for every $a$ in $\mathcal A$,
where $\Delta$ is a $*$-derivation. In particular, $\delta$ is a $*$-derivation when $\delta(1)=0$.
\end{corollary}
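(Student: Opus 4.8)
The plan is to follow the architecture of the proof of Theorem \ref{new01}, replacing its associative inputs by their Jordan counterparts, most notably invoking Corollary \ref{06} in place of the matrix $*$-derivation result. First I would set $\Delta(a)=\delta(a)-\delta(1)a$ for every $a$ in $\mathcal A$, so that $\Delta(1)=0$ and it suffices to prove that $\Delta$ is a $*$-derivation. Because $\mathcal A$ is a von Neumann algebra and $\delta(1)a=a\delta(1)$ for every $a$, the element $\delta(1)$ lies in the center $\mathcal Z(\mathcal A)$, and since $\mathcal Z(\mathcal A)$ is a $*$-subalgebra, $\delta(1)^*$ is central as well. I would then verify that the hypothesis transfers to $\Delta$: expanding $a\circ\Delta(b)^*+\Delta(a)\circ b^*$ and collecting the terms carrying $\delta(1)$ or $\delta(1)^*$, centrality lets me pull these out as $\delta(1)(a\circ b^*)+\delta(1)^*(a\circ b^*)$, which vanishes whenever $a\circ b^*=0$. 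Hence $a\circ b^*=0$ implies $a\circ\Delta(b)^*+\Delta(a)\circ b^*=0$. This is exactly where the extra hypothesis $\delta(1)a=a\delta(1)$ is used.

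Next I would dispose of two special cases. If $\mathcal A$ is abelian, then $\mathcal A\cong C(X)$, and commutativity gives $a\circ b^*=2ab^*$ together with $ab^*=0\Leftrightarrow ab=0$; thus $a\circ b^*=0$ is equivalent to $ab=0$ and the transferred condition collapses to $a\Delta(b)^*+\Delta(a)b^*=0$, which is precisely the situation of Case 1 in the proof of Theorem \ref{new01}. The same argument there — multiplying by $a$, passing to $C(X)$, and invoking the continuity of $\Delta$ — forces $\Delta=0$. If instead $\mathcal A\cong M_n(\mathcal B)$ with $n\geqslant2$ and $\mathcal B$ a von Neumann algebra, then $\mathcal A$ is zero Jordan product determined, and $\Delta$ (with $\Delta(1)=0$) satisfies the hypothesis of Corollary \ref{06}, so $\Delta$ is a $*$-derivation.

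For a general von Neumann algebra I would use the decomposition $\mathcal A\cong\sum_i\bigoplus\mathcal A_i$ into summands of the two preceding types and patch them together; this is the step I expect to require the most care, because the Jordan product mixes left and right multiplications and does not localize as transparently as the associative product. The device that rescues it is that each $1_i$ is a central projection. Applying the transferred condition to the commuting pair $1_i$, $1-1_i$ (whose Jordan product vanishes) produces a relation in $d:=\Delta(1_i)$ whose cross terms, by centrality of $1_i$, collapse to give $1_i d^*=0$ and hence $d=0$; applying it to $a_i$ in $\mathcal A_i$ together with $1-1_i$ gives $\Delta(a_i)\circ(1-1_i)=0$, and centrality upgrades this to $(1-1_i)\Delta(a_i)=0$, so $\Delta(\mathcal A_i)\subseteq\mathcal A_i$. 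Each restriction $\Delta_i$ then satisfies the hypotheses of one of the two special cases and is a $*$-derivation, whence $\Delta$ is a $*$-derivation on $\mathcal A$. Finally $\delta(a)=\Delta(a)+\delta(1)a$, and $\delta=\Delta$ is a $*$-derivation when $\delta(1)=0$.
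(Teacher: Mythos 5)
Your proposal is correct and follows exactly the route the paper intends: the paper omits the proof of this corollary, saying only that it follows by Corollary \ref{06} together with an argument similar to the proof of Theorem \ref{new01}, which is precisely your architecture --- transfer the hypothesis to $\Delta(a)=\delta(a)-\delta(1)a$ using centrality of $\delta(1)$ (and of $\delta(1)^*$), dispose of the abelian summands by reducing the Jordan condition to the associative one of Theorem \ref{new01} and of the matrix summands by Corollary \ref{06}, and patch along the central projections $1_i$. Your explicit verifications (the collapse of the error terms to $(\delta(1)+\delta(1)^*)(a\circ b^*)$, the computation giving $\Delta(1_i)=0$, and $\Delta(\mathcal A_i)\subseteq\mathcal A_i$) are, if anything, more detailed than what the paper records.
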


\begin{lemma}{\rm\cite[Theorem~2.1]{Guangyu. An}}\label{02}
Suppose that $\mathcal A$ is a unital algebra and $\mathcal X$ is a linear space.
If $\phi$ is a bilinear mapping from $\mathcal{A}\times\mathcal{A}$
into $\mathcal{X}$ such that
$$a,b\in\mathcal{A},~a\circ b=0\Rightarrow\phi(a,b)=0,$$
then we have that
$$\phi(a,x)=\frac{1}{2}\phi(ax,1)+\frac{1}{2}\phi(xa,1)$$
for every $a$ in $\mathcal{A}$ and every $x$ in $\mathfrak{J}(\mathcal{A})$.
\end{lemma}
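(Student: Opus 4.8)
The plan is to exploit linearity in the second slot to reduce the statement to the generators of $\mathfrak{J}(\mathcal A)$, to prove it first for a single idempotent by a Peirce computation, and then to propagate it along products of idempotents. Concretely, I would set
\[
V=\Bigl\{x\in\mathcal A:\ \phi(a,x)=\tfrac12\phi(ax,1)+\tfrac12\phi(xa,1)\ \text{for every }a\in\mathcal A\Bigr\}.
\]
Since both sides of the defining identity are linear in $x$, the set $V$ is a linear subspace, and it visibly contains $1$ (both sides reduce to $\phi(a,1)$). Because $\mathfrak{J}(\mathcal A)$ is, by definition, the linear span of all finite products of idempotents, it suffices to show that $V$ contains every such product; equivalently, it is enough to prove that $V$ contains all idempotents and is stable under multiplication by idempotents.

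First I would treat a single idempotent $p$. Writing $a=a_{11}+a_{12}+a_{21}+a_{22}$ for the Peirce decomposition relative to $p$ (so $a_{11}=pap$, $a_{12}=pa(1-p)$, $a_{21}=(1-p)ap$ and $a_{22}=(1-p)a(1-p)$), the hypothesis applied to the zero Jordan products
\[
a_{22}\circ p=0,\qquad a_{11}\circ(1-p)=0,\qquad a_{12}\circ(1-2p)=0,\qquad a_{21}\circ(1-2p)=0
\]
yields $\phi(a_{22},p)=0$, $\phi(a_{11},p)=\phi(a_{11},1)$, and $\phi(a_{12},p)=\tfrac12\phi(a_{12},1)$, $\phi(a_{21},p)=\tfrac12\phi(a_{21},1)$. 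Summing these four contributions and comparing with $ap=a_{11}+a_{21}$ and $pa=a_{11}+a_{12}$ gives exactly $\phi(a,p)=\tfrac12\phi(ap,1)+\tfrac12\phi(pa,1)$, so $p\in V$.

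The remaining, and decisive, task is to pass from idempotents to their products, i.e. to show that $V$ absorbs multiplication by idempotents. Here the auxiliary relations I would extract from the hypothesis are the corner annihilations $\phi(p\mathcal Ap,(1-p)\mathcal A(1-p))=0$ and its mirror image, together with the observation that for $n\in p\mathcal A(1-p)$ or $n\in(1-p)\mathcal Ap$ the element $p+n$ is again an idempotent; applying the idempotent case to $p$ and to $p+n$ and subtracting shows that $a\mapsto\phi(a,\cdot)$ is controlled on the off-diagonal corners, which lets one strip the off-diagonal part of a product $p_1\cdots p_k$ relative to $p_1$ and reduce it to its $(1,1)$-corner. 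Iterating this reduction and inducting on the number of idempotent factors should place every product of idempotents in $V$, whence $\mathfrak{J}(\mathcal A)\subseteq V$.

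The main obstacle is precisely this last step. In the associative setting of Lemma \ref{01} one has the clean one-step shift $\phi(ae,b)=\phi(a,eb)$ for an idempotent $e$, obtained because $ae\cdot(1-e)b=0$; the Jordan hypothesis is genuinely weaker, since $a\circ b=0$ forces $ab=-ba$ rather than $ab=0$, so no such single-sided shift is available and the off-diagonal/diagonal bookkeeping does not collapse in one move. Controlling the diagonal corner contributions and organizing the induction so that the number of idempotent factors genuinely decreases is therefore the delicate technical heart of the argument, and is where the bulk of the work in \cite{Guangyu. An} lies.
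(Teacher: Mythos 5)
Your base case is correct: the four zero Jordan products $a_{22}\circ p=0$, $a_{11}\circ(1-p)=0$, $a_{12}\circ(1-2p)=0$, $a_{21}\circ(1-2p)=0$ and the resulting bookkeeping do give $\phi(a,p)=\frac{1}{2}\phi(ap,1)+\frac{1}{2}\phi(pa,1)$, and the two auxiliary devices you name (the annihilation $\phi\bigl(p\mathcal{A}p,(1-p)\mathcal{A}(1-p)\bigr)=0$ and the idempotent $p+n$ trick, which puts both off-diagonal corners inside $V$) are genuinely the right ingredients. But there is a real gap exactly where you locate the ``technical heart'' and then stop. After stripping the off-diagonal part of $x=p_1p_2\cdots p_k$ relative to $p:=p_1$, you are left with the corner $pyp$, where $y=p_2\cdots p_k$; and ``iterating this reduction and inducting on the number of idempotent factors'' cannot close the argument, because $pyp$ is \emph{not} a product of fewer idempotents (written out it is $p_1p_2\cdots p_kp_1$, a product of $k+1$ of them), and applying your corner-stripping to $pyp$ simply returns $pyp$. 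So the induction, as organized, never terminates; you acknowledge the missing step rather than supply it, which leaves the lemma unproved for every product of length at least two.

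What closes the gap is not a further iteration of the reduction but three moves absent from your sketch. First, apply the induction hypothesis to the \emph{shorter} product $y$ itself: since $y$, $py(1-p)$ and $(1-p)yp$ all lie in $V$ and $V$ is a subspace, the diagonal combination $pyp+(1-p)y(1-p)=y-py(1-p)-(1-p)yp$ lies in $V$; this links the two diagonal corners. Second, decompose the \emph{first} argument $a$ by Peirce: for $a_{11}=pap$ the zero Jordan product $a_{11}\circ\bigl((1-p)y(1-p)\bigr)=0$ kills the unwanted corner in the linking identity and yields the $V$-condition for $pyp$ against $a_{11}$, while for $a_{22}$ one uses $a_{22}\circ pyp=0$ directly. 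Third, for the off-diagonal components $a_{12},a_{21}$ of $a$ you need the \emph{first-slot} analogues of your identities, $\phi(q,b)=\frac{1}{2}\phi(1,q\circ b)$ for idempotent $q$ and hence $\phi(n,b)=\frac{1}{2}\phi(1,n\circ b)$ for off-diagonal $n$ --- these are available because the hypothesis is symmetric ($a\circ b=b\circ a$) --- combined with $\phi(1,w)=\phi(w,1)$ for off-diagonal $w$, giving $\phi(a_{12},pyp)=\frac{1}{2}\phi(pyp\,a_{12},1)$ as required. It is precisely this interplay (induction hypothesis invoked for $y$ rather than $pyp$, the $p\mathcal{A}p$ versus $(1-p)\mathcal{A}(1-p)$ annihilation, and the first-slot identities) that constitutes the inductive step; without it your proposal proves the lemma only for $x$ in the linear span of idempotents and off-diagonal corner elements, which is in general a proper subspace of $\mathfrak{J}(\mathcal{A})$.
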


Suppose that $\mathcal A$ is a unital algebra and
$\mathcal M$ is a unital $\mathcal{A}$-bimodule
satisfying that
$$\{m\in\mathcal{M}: xmx=0~\mathrm{for~every}~x\in\mathcal{J}\}=\{0\},$$
where $\mathcal J$ is an ideal of $\mathcal A$ linear generated by idempotents in $\mathcal A$.
In \cite[Theorem 4.3]{Hoger Ghahramani}, H. Ghahramani studies the linear mapping $\delta$
from $\mathcal A$ into  $\mathcal M$ satisfies
$$a,b\in\mathcal{A},~a\circ b=0\Rightarrow a\circ\delta(b)+\delta(a)\circ b=0,$$
and show that $\delta$ is a generalized Jordan derivation.
In the following, we suppose that $\mathcal J$ is an ideal of $\mathcal A$
generated algebraically by all
idempotents in $\mathcal{A}$, and have the following result.

\begin{theorem}\label{15}
Suppose that $\mathcal A$ is a unital $\ast$-algebra, $\mathcal M$ is a unital
$\ast$-$\mathcal{A}$-bimodule, and $\mathcal{J}\subseteq\mathfrak{J}(\mathcal{A})$ is an ideal of $\mathcal A$
such that
$$\{m\in\mathcal{M}: xmx=0~\mathrm{for~every}~x\in\mathcal{J}\}=\{0\}.$$
If $\delta$ is a linear mapping from $\mathcal{A}$ into $\mathcal{M}$ such that
$$a,b\in\mathcal{A},~a\circ b^*=0\Rightarrow a\circ\delta(b)^*+\delta(a)\circ b^*=0~\mathrm{and}~\delta(1)a=a\delta(1),$$
then $\delta(a)=\Delta(a)+\delta(1)a$ for every $a$ in $\mathcal A$,
where $\Delta$ is a $*$-Jordan derivation. In particular, $\delta$ is a $*$-Jordan derivation when $\delta(1)=0$.
\end{theorem}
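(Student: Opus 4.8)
The plan is to reduce everything to showing that the linear map $\Delta(a)=\delta(a)-\delta(1)a$ is a $*$-Jordan derivation, and then to run an argument parallel to the proof of Theorem \ref{+++8}, with the role played there by the zero Jordan product determined hypothesis taken over here by the separating condition (property $\mathbb M$) via the method of \cite[Theorem 4.3]{Hoger Ghahramani}. First I would record that $\Delta(1)=0$ and that $\Delta$ inherits the orthogonality condition. Since $\delta(1)a=a\delta(1)$ for all $a$, applying $*$ shows $\delta(1)^*$ is central as well; hence for $a\circ b^*=0$ the two correction terms $a\circ(b^*\delta(1)^*)$ and $(\delta(1)a)\circ b^*$ collapse, by centrality, to $(a\circ b^*)\delta(1)^*=0$ and $(a\circ b^*)\delta(1)=0$. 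Therefore
\[
a,b\in\mathcal A,\ a\circ b^*=0\ \Longrightarrow\ a\circ\Delta(b)^*+\Delta(a)\circ b^*=0,
\]
and it suffices to prove that this $\Delta$ (with $\Delta(1)=0$) is a $*$-Jordan derivation.

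Next I would introduce the bilinear map $\phi(a,b)=a\circ\Delta(b^*)^*+\Delta(a)\circ b$, which vanishes whenever $a\circ b=0$ (apply the displayed implication with $b$ in the role of $b^*$). Because $\Delta(1)=0$ we have $\phi(c,1)=2\Delta(c)$, so Lemma \ref{02}, applied with $x\in\mathcal J\subseteq\mathfrak J(\mathcal A)$, yields
\[
a\circ\Delta(x^*)^*+\Delta(a)\circ x=\phi(a,x)=\tfrac12\phi(ax,1)+\tfrac12\phi(xa,1)=\Delta(a\circ x).
\]
Setting $a=1$ gives the partial $*$-identity $\Delta(x^*)=\Delta(x)^*$ for $x\in\mathcal J$, and feeding this back produces the key relation
\[
\Delta(a\circ x)=a\circ\Delta(x)+\Delta(a)\circ x\qquad(a\in\mathcal A,\ x\in\mathcal J),
\]
that is, $\Delta$ is a Jordan derivation ``along'' the ideal $\mathcal J$.

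The \emph{main obstacle} is to upgrade this to the genuine identity $\Delta(a\circ b)=a\circ\Delta(b)+\Delta(a)\circ b$ for all $a,b\in\mathcal A$; this is exactly where property $\mathbb M$ must be used essentially, since the relation above together with the Jordan identities only generates consequences internal to $\mathcal J$ and, by itself, never pins down $\Delta$ on ordinary products of elements outside $\mathcal J$. Following the method of \cite[Theorem 4.3]{Hoger Ghahramani}, I would use the Peirce decomposition attached to the idempotents generating $\mathcal J$, together with the vanishing of $\phi$ on zero Jordan products, to prove the factorization $\phi(a,b)=\Delta(a\circ b)$ for all $a,b$. Evaluating this at $a=1$ upgrades the partial $*$-identity to $\Delta(b^*)=\Delta(b)^*$ on all of $\mathcal A$, and the resulting $\phi(a,b)=a\circ\Delta(b)+\Delta(a)\circ b=\Delta(a\circ b)$ is precisely the Jordan derivation property; thus the factorization delivers both properties at once. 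The technical heart is controlling the ``off-$\mathcal J$'' component of the defect, for which I would rely on the following consequence of property $\mathbb M$: if $m\in\mathcal M$ satisfies $m\circ x=0$ for every $x\in\mathcal J$, then $m=0$. Indeed, $mx+xm=0$ gives $mxy=xym=-mxy$ for $x,y\in\mathcal J$, whence $mxy=0$; taking $y=x$ yields $mx^2=x^2m=0$ and then $xmx=-x^2m=0$, so property $\mathbb M$ forces $m=0$.

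Finally, in case the globalization is carried out so as to yield only the Jordan derivation property directly, the full $*$-property can be secured by a clean separate step: the map $\hat\Delta(a)=\Delta(a^*)^*$ is again a Jordan derivation, so $\Theta=\hat\Delta-\Delta$ is a Jordan derivation vanishing on $\mathcal J$; for $a\in\mathcal A$ and $x\in\mathcal J$ the relation $0=\Theta(a\circ x)=a\circ\Theta(x)+\Theta(a)\circ x=\Theta(a)\circ x$ together with the consequence of property $\mathbb M$ above gives $\Theta(a)=0$, i.e. $\Delta(a^*)=\Delta(a)^*$. Either way $\Delta$ is a $*$-Jordan derivation, so $\delta(a)=\Delta(a)+\delta(1)a$ as claimed, and the case $\delta(1)=0$ is immediate.
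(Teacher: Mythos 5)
Your opening moves coincide with the paper's: the reduction to $\Delta(a)=\delta(a)-\delta(1)a$, the verification (via centrality of $\delta(1)$ and $\delta(1)^*$) that $\Delta$ inherits the orthogonality condition, and the use of Lemma \ref{02} to obtain $\Delta(x^*)=\Delta(x)^*$ and $\Delta(a\circ x)=a\circ\Delta(x)+\Delta(a)\circ x$ for $a\in\mathcal A$, $x\in\mathcal J$ are exactly the paper's first stage. The genuine gap sits precisely at what you yourself call the main obstacle: you never prove the global identity $\Delta(a^{2})=a\Delta(a)+\Delta(a)a$, and the route you propose for it aims at the wrong target. You assert that the method of \cite[Theorem 4.3]{Hoger Ghahramani} will yield the factorization $\phi(a,b)=\Delta(a\circ b)$ for \emph{all} $a,b\in\mathcal A$. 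But property $\mathbb M$ is not a zero-Jordan-product-determined hypothesis: Lemma \ref{02} and Peirce-type manipulations with idempotents control $\phi(a,x)$ only when one variable lies in $\mathfrak J(\mathcal{A})$, and an ideal $\mathcal J\subseteq\mathfrak{J}(\mathcal{A})$ together with the sandwich separation gives no handle on $\phi(a,b)$ for two general arguments. Moreover, that factorization already encodes the entire theorem (put $a=1$ to get $\Delta(b^*)^*=\Delta(b)$ for all $b$), so taking it as the intermediate goal is circular. What the technique actually produces, and what the paper does, is a \emph{sandwiched defect}: setting $\{x,m,y\}=xmy+ymx$ and using that $2\{x,a,y\}=(x\circ a)\circ y+(y\circ a)\circ x-a\circ(x\circ y)$ is a combination of Jordan products with one factor in $\mathcal J$, one derives from the partial relation the identities
\begin{align*}
\Delta\{x,a,y\}&=\{\Delta(x),a,y\}+\{x,\Delta(a),y\}+\{x,a,\Delta(y)\},\\
\Delta\{x,a^{2},y\}&=\{\Delta(x),a^{2},y\}+\{x,a\circ\Delta(a),y\}+\{x,a^{2},\Delta(y)\}
\end{align*}
for $x,y\in\mathcal J$ and $a\in\mathcal A$; applying the first with $a^{2}$ in place of $a$, taking $y=x$, and comparing with the second gives $x(\Delta(a^{2})-a\circ\Delta(a))x=0$ for every $x\in\mathcal J$, whence property $\mathbb M$ forces $\Delta(a^{2})=a\circ\Delta(a)$. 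Without this step, the separating hypothesis never acts on general elements of $\mathcal A$, and your proof has a hole exactly where the theorem's content lies.

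By contrast, your ending is correct and is genuinely cleaner than the paper's treatment of the $*$-identity. Your auxiliary lemma is right: if $m\circ x=0$ for all $x\in\mathcal J$, then $mxy=xym=-mxy$ gives $mx^{2}=x^{2}m=0$, hence $xmx=0$ and $m=0$ by property $\mathbb M$. Granted the Jordan derivation property, $\hat\Delta(a)=\Delta(a^*)^*$ is again a Jordan derivation, $\Theta=\hat\Delta-\Delta$ vanishes on $\mathcal J$ by the partial $*$-identity, so $\Theta(a)\circ x=\Theta(a\circ x)=0$ for all $x\in\mathcal J$ and $\Theta\equiv 0$. The paper instead first enlarges $\mathcal J$ to the self-adjoint ideal generated by $\mathcal J\cup\mathcal J^*$ and then extracts $x^*(\Delta(a)^*-\Delta(a^*))x^*=0$ from the expansion of $\Delta(xax)$; your route avoids the self-adjointization entirely. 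So the repair is local: insert the triple-product sandwich argument above to obtain the Jordan property, then keep your $\Theta$-argument to finish.
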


\begin{proof}
Let $\widehat{\mathcal J}$ be an algebra generated algebraically by $\mathcal J$
and $\mathcal J^*$. Since
$\mathcal{J}\subseteq\mathfrak{J}(\mathcal{A})$ is an ideal of $\mathcal A$,
it is easy to show that $\widehat{\mathcal J}\subseteq\mathfrak{J}(\mathcal{A})$ is also
an ideal of $\mathcal A$, and such that
$$\{m\in\mathcal{M}: xmx=0~\mathrm{for~every}~x\in\widehat{\mathcal J}\}=\{0\}.$$
Thus without loss of generality, we can assume that $\mathcal J$ is a self-adjoint ideal of $\mathcal A$,
otherwise, we may replace $\mathcal J$ by $\widehat{\mathcal J}$.

Define a linear mapping $\Delta$ from $\mathcal{A}$
into $\mathcal{M}$ by
$$\Delta(a)=\delta(a)-\delta(1)a$$
for every $a$ in $\mathcal A$.
In the following we show that $\Delta$ is a $*$-derivation.

It is clear that $\Delta(1)=0$, and by $\delta(1)a=a\delta(1)$ we have that
$a\circ b^*=0$ implies that $a\circ\Delta(b)^*+\Delta(a)\circ b^*=0$.

Define a bilinear mapping $\phi$ from $\mathcal{A}\times\mathcal{A}$
into $\mathcal{M}$ by
$$\phi(a,b)=a\circ\Delta(b^*)^*+\Delta(a)\circ b$$
for each $a$ and $b$ in $\mathcal{A}$.
By the assumption we know that $a\circ b=0$ implies $\phi(a,b)=0$.

Let  $a$, $b$ be in $\mathcal{A}$ and $x$ be in $\mathcal{J}$.
By Lemma \ref{02}, we can obtain that
$$\phi(x,1)=\phi(1,x).$$
It follows that
\begin{align}
x\circ\Delta(1)^*+\Delta(x)\circ 1=1\circ\Delta(x^*)^*+\Delta(1)\circ x.                     \label{1.6}
\end{align}
By \eqref{1.6} and $\Delta(1)=0$, we know that $\Delta(x)^*=\Delta(x^*)$.
Again by Lemma \ref{02}, it follows that
\begin{align}
a\circ\Delta(x^*)^*+\Delta(a)\circ x=\frac{1}{2}[\Delta(ax)\circ 1+\Delta(xa)\circ 1].                       \label{1.7}
\end{align}
By \eqref{1.7} and $\Delta(x)^*=\Delta(x^*)$, it is easy to show that
\begin{align}
\Delta(a\circ x)=a\circ\Delta(x)+\Delta(a)\circ x.                                         \label{1.8}
\end{align}

Next, we prove that $\Delta$ is a Jordan derivation.

Define $\{a,m,b\}=amb+bma$ and $\{a,b,m\}=\{m,b,a\}=abm+mba$
for each $a$, $b$ in $\mathcal{A}$ and every $m$ in $\mathcal{M}$.
Let $a$ be in $\mathcal{A}$ and $x, y$ be in $\mathcal{M}$.

By the technique of the proof of \cite[Theorem 4.3]{Hoger Ghahramani} and \eqref{1.8}, we have the following two identities:
\begin{align}
\Delta\{x,a,y\}=\{\Delta(x),a,y\}+\{x,\Delta(a),y\}+\{x,a,\Delta(y)\},                                                                   \label{1.9}
\end{align}
and
\begin{align}
\Delta\{x,a^{2},y\}=\{\Delta(x),a^{2},y\}+\{x,a\circ\Delta(a),y\}+\{x,a^{2},\Delta(y)\}.                                             \label{1.10}
\end{align}
On the other hand, by \eqref{1.9} we have that
\begin{align}
\Delta\{x,a^{2},x\}=\{\Delta(x),a^{2},x\}+\{x,\Delta(a^{2}),x\}+\{x,a^{2},\Delta(x)\}.                                          \label{1.11}
\end{align}
By comparing \eqref{1.10} and \eqref{1.11}, it follows that
$\{x,\Delta(a^{2}),x\}=\{x,a\circ\Delta(a),x\}.$
That is $x(\Delta(a^{2})-a\circ\Delta(a))x=0.$
By the assumption, it implies that
$\Delta(a^{2})-a\circ\Delta(a)=0$ for every $a$ in $\mathcal{A}$.

It remains to show that $\Delta(a)^*=\Delta(a^*)$
for every $a$ in $\mathcal A$.
Indeed, for every $a$ in $\mathcal{A}$ and every $x$ in $\mathcal{J}$, we have that
$\Delta(xax)^*=\Delta((xax)^*)$.
Since $\Delta$ is a Jordan derivation,
it implies that
$$(\Delta(x)ax+x\Delta(a)x+xa\Delta(x))^*=\Delta(x^*)a^*x^*+x^*\Delta(a^*)x^*+x^*a^*\Delta(x^*).$$
Thus we can obtain that
$x^*(\Delta(a)^*-\Delta(a^*))x^*=0$.
Since $\mathcal{J}$ is a self-adjoint ideal of $\mathcal A$,
it follows that $\Delta(a)^*=\Delta(a^*)$.
\end{proof}

Let $\mathcal A$ be a $C^*$-algebra and $\mathcal M$ be a
Banach $\ast$-$\mathcal A$-bimodule. Denote by $\mathcal
A^{\sharp\sharp}$ and $\mathcal M^{\sharp\sharp}$ the second dual
space of $\mathcal A$ and $\mathcal M$, respectively. By \cite[p.26]{H. Dales2}, we can define a product $\diamond$ in
$\mathcal{A}^{\sharp\sharp}$ by
$$a^{\sharp\sharp}\diamond b^{\sharp\sharp}=\lim\limits_{\lambda}\lim\limits_{\mu}\alpha_{\lambda}\beta_{\mu}$$
for each $a^{\sharp\sharp}$, $b^{\sharp\sharp}$ in $\mathcal{A}^{\sharp\sharp}$, where
$(\alpha_{\lambda})$ and $(\beta_{\mu})$ are two nets in $\mathcal{A}$
with $\|\alpha_{\lambda}\|\leqslant\|a^{\sharp\sharp}\|$ and
$\|\beta_{\mu}\|\leqslant\|b^{\sharp\sharp}\|$, such that
$\alpha_{\lambda}\rightarrow a^{\sharp\sharp}$ and $\beta_{\mu}\rightarrow b^{\sharp\sharp}$
in the weak$^{*}$-topology $\sigma(\mathcal{A}^{\sharp\sharp}, \mathcal{A}^{\sharp})$.
Moreover, we can define an involution $*$
in $\mathcal A^{\sharp\sharp}$ by
$$(a^{\sharp\sharp})^*(\rho)=\overline{a^{\sharp\sharp}(\rho^*)},~~~~\rho^*(a)=\overline{\rho(a^*)},$$
where
$a^{\sharp\sharp}$ in $\mathcal{A}^{\sharp\sharp}$, $\rho$ in $A^{\sharp}$ and $a$ in $\mathcal A$.
By \cite[p.726]{R.Kadison J. Ringrose}, we know that $\mathcal{A}^{\sharp\sharp}$ is a von Neumann algebra
under the product $\diamond$ and the involution $*$.

Since $\mathcal M$ is a Banach $\mathcal A$-bimodule,
$\mathcal{M}^{\sharp\sharp}$ turns into a dual Banach $(\mathcal{A}^{\sharp\sharp},\diamond)$-bimodule with the operation defined by
$$a^{\sharp\sharp}\cdot m^{\sharp\sharp}=\lim\limits_{\lambda}\lim\limits_{\mu}a_{\lambda}m_{\mu}~\mathrm{and}~m^{\sharp\sharp}\cdot a^{\sharp\sharp}=\lim\limits_{\mu}\lim\limits_{\lambda}m_{\mu}a_{\lambda}$$
for every $a^{\sharp\sharp}$ in $\mathcal{A}^{\sharp\sharp}$ and every $m^{\sharp\sharp}$ in $\mathcal{M}^{\sharp\sharp}$, where $(a_{\lambda})$ is a net in $\mathcal{A}$ with $\|a_{\lambda}\|\leqslant\|a^{\sharp\sharp}\|$
and $(a_{\lambda})\rightarrow a^{\sharp\sharp}$ in $\sigma(\mathcal{A}^{\sharp\sharp},\mathcal{A}^{\sharp})$, $(m_{\mu})$ is a net in $\mathcal{M}$ with $\|m_{\mu}\|\leqslant\|m^{\sharp\sharp}\|$
and $(m_{\mu})\rightarrow m^{\sharp\sharp}$ in $\sigma(\mathcal{M}^{\sharp\sharp},\mathcal{M}^{\sharp})$.

We remarked, in the discussion preceding Theorem \ref{++++5}, that $\mathcal{M}^{\sharp\sharp}$  has an involution $*$ and it is continuous
in $\sigma(\mathcal{M}^{\sharp\sharp}, \mathcal{M}^{\sharp})$.
By \cite[p.553]{J. Alaminos}, we know that every continuous bilinear map $\varphi$ from $\mathcal{A}\times\mathcal{M}$ into $\mathcal{M}$ is Arens regular, which means that
$$\lim\limits_{\lambda}\lim\limits_{\mu}\varphi(a_{\lambda},m_{\mu})=\lim\limits_{\mu}\lim\limits_{\lambda}\varphi(a_{\lambda},m_{\mu})$$
for every $\sigma(\mathcal{A}^{\sharp\sharp}, \mathcal{A}^{\sharp})$-convergent net $(a_{\lambda})$ in $\mathcal{A}$ and every $\sigma(\mathcal{M}^{\sharp\sharp}, \mathcal{M}^{\sharp})$-convergent net $(m_{\mu})$ in $\mathcal{M}$.
Thus we can obtain that
$$(a^{\sharp\sharp}\cdot m^{\sharp\sharp})^*=(\lim\limits_{\lambda}\lim\limits_{\mu}a_{\lambda}m_{\mu})^*=
\lim\limits_{\lambda}\lim\limits_{\mu}m_{\mu}^*a_{\lambda}^*=\lim\limits_{\mu}\lim\limits_{\lambda}m_{\mu}^*a_{\lambda}^*=(m^{\sharp\sharp})^*\cdot(a^{\sharp\sharp})^*,$$
where $(a_{\lambda})$ is a net in $\mathcal{A}$ with
$(a_{\lambda})\rightarrow a^{\sharp\sharp}$ in $\sigma(\mathcal{A}^{\sharp\sharp},\mathcal{A}^{\sharp})$ and $(m_{\mu})$ is a net in $\mathcal{M}$ with
$(m_{\mu})\rightarrow m^{\sharp\sharp}$ in $\sigma(\mathcal{M}^{\sharp\sharp},\mathcal{M}^{\sharp})$.
Similarly, we can show that $(m^{\sharp\sharp}\cdot a^{\sharp\sharp})^*=(a^{\sharp\sharp})^*\cdot(m^{\sharp\sharp})^*$.
It implies that $\mathcal M^{\sharp\sharp}$ is a Banach $\ast$-$\mathcal A^{\sharp\sharp}$-bimodule.

A projection $p$ in $\mathcal A^{\sharp\sharp}$ is called \emph{open} if
there exists an increasing net $(a_{\alpha})$ of positive elements in $\mathcal A$ such that $p=\lim\limits_{\alpha}a_{\alpha}$ in the
weak$^*$-topology of $\mathcal A^{\sharp\sharp}$. If $p$ is open, we say the projection $1-p$ is \emph{closed}.

For a unital $C^*$-algebra, we have the following result.

\begin{theorem}\label{04}
Suppose that $\mathcal A$ is a unital $C^*$-algebra and $\mathcal M$ is
a unital Banach $*$-$\mathcal A$-bimodule.
If $\delta$ is a continuous linear mapping from $\mathcal A$
into $\mathcal M$ such that $\delta(1)a=a\delta(1)$ for every $a$ in $\mathcal A$,
then the following three statements are equivalent:\\
$(1)$ $a,b\in\mathcal{A},~a\circ b^*=0\Rightarrow a\circ\delta(b)^*+\delta(a)\circ b^*=0;$\\
$(2)$ $a,b\in\mathcal{A},~ab^*=b^*a=0\Rightarrow a\circ\delta(b)^*+\delta(a)\circ b^*=0;$\\
$(3)$ $\delta(a)=\Delta(a)+\delta(1)a$ for every a in $\mathcal A$,
where $\Delta$ is a $*$-derivation from $\mathcal A$ into $\mathcal M$.
\end{theorem}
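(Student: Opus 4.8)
The plan is to prove the cycle of implications $(1)\Rightarrow(2)\Rightarrow(3)\Rightarrow(1)$, with essentially all of the work residing in $(2)\Rightarrow(3)$. The step $(1)\Rightarrow(2)$ is immediate: the hypothesis $ab^*=b^*a=0$ of $(2)$ forces $a\circ b^*=ab^*+b^*a=0$, so it is a special case of the hypothesis of $(1)$, while the two conclusions are the same identity.

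For $(3)\Rightarrow(1)$ I would argue by a direct computation. Assume $\delta(a)=\Delta(a)+\delta(1)a$ with $\Delta$ a $*$-derivation; then $\Delta(1)=0$, and taking adjoints in $\delta(1)a=a\delta(1)$ shows that $\delta(1)^*$ also commutes with the action of every $a\in\mathcal A$. Writing $\delta(b)^*=\Delta(b^*)+b^*\delta(1)^*$ and using this centrality to pull $\delta(1)$ and $\delta(1)^*$ out of the Jordan products, for any $a,b$ with $a\circ b^*=0$ one computes
\begin{align*}
a\circ\delta(b)^*+\delta(a)\circ b^*
&=\bigl(a\circ\Delta(b^*)+\Delta(a)\circ b^*\bigr)+\bigl(\delta(1)+\delta(1)^*\bigr)(a\circ b^*)\\
&=\Delta(ab^*)+\Delta(b^*a)=\Delta(a\circ b^*)=0,
\end{align*}
where the Leibniz rule for $\Delta$ collapses the first bracket and the second bracket vanishes since $a\circ b^*=0$. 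This is exactly $(1)$.

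The heart of the matter is $(2)\Rightarrow(3)$, and here I would first normalize. Since $\delta(1)$ is central, the map $\Delta=\delta-L_{\delta(1)}$, with $L_{\delta(1)}a=\delta(1)a$, is again continuous, satisfies $\Delta(1)=0$, and still obeys $(2)$: the central correction contributes only $\delta(1)(ab^*+b^*a)$ and $\delta(1)^*(ab^*+b^*a)$, both zero under the hypothesis $ab^*=b^*a=0$. Hence it suffices to treat a continuous $\delta$ with $\delta(1)=0$ and to show it is a $*$-derivation. A general unital $C^*$-algebra is neither zero Jordan product determined nor does it force property $\mathbb M$ on $\mathcal M$, so Theorems \ref{+++8} and \ref{15} do not apply directly; I would therefore pass to second duals, extending $\delta$ to $\delta^{\sharp\sharp}\colon\mathcal A^{\sharp\sharp}\to\mathcal M^{\sharp\sharp}$, where $\mathcal A^{\sharp\sharp}$ is the enveloping von Neumann algebra and $\mathcal M^{\sharp\sharp}$ is the Banach $*$-$\mathcal A^{\sharp\sharp}$-bimodule constructed just before the theorem. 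On $\mathcal A^{\sharp\sharp}\cong\bigoplus_i\mathcal A_i$ I would then mimic the proof of Theorem \ref{new01} summand by summand: on an abelian summand the identity $a\circ b^*=2ab^*$ makes the hypotheses of $(1)$ and $(2)$ coincide, so the commutative argument in Case 1 of Theorem \ref{new01} forces the normalized map to vanish there; on a summand $M_n(\mathcal B)$ with $n\geq2$, which is zero Jordan product determined, the abundant two-sided orthogonal matrix-unit pairs $e_{ij}e_{kl}=e_{kl}e_{ij}=0$ let one run the matrix computation directly from condition $(2)$ and conclude a $*$-derivation (compare Corollary \ref{06}). The orthogonal central projections $1_i$ glue the summands together and annihilate the off-diagonal terms. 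Finally I would restrict to $\mathcal A$: since $\Delta(a)=\delta(a)-\delta(1)a$ already lies in $\mathcal M$ and the Leibniz and $*$ identities hold in $\mathcal M^{\sharp\sharp}\supseteq\mathcal M$, they hold in $\mathcal M$, yielding $(3)$.

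The hard part will be the passage to the bidual, namely transporting the two-sided orthogonality condition $(2)$ from $\mathcal A$ up to $\mathcal A^{\sharp\sharp}$. Because the involution and the module actions on $\mathcal M^{\sharp\sharp}$ are only separately weak-$*$ continuous, one cannot naively approximate orthogonal elements of $\mathcal A^{\sharp\sharp}$ by orthogonal nets in $\mathcal A$; this is precisely where the open/closed projection calculus set up before the theorem is needed, together with the Arens regularity of the module maps and the zero-product technology of \cite[Theorem~4.5]{J. Alaminos}, to realize the relevant support projections inside $\mathcal A^{\sharp\sharp}$ and to certify the lifted orthogonality relations. The abelian summand, where $a\Delta(b)=0$ at orthogonal pairs must be upgraded to $\Delta\equiv0$, is the second delicate point and would be handled exactly as in Case 1 of Theorem \ref{new01}.
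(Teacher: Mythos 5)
Your reductions $(1)\Rightarrow(2)$ and $(3)\Rightarrow(1)$ are fine, as is the normalization $\Delta=\delta-L_{\delta(1)}$. The genuine gap is in $(2)\Rightarrow(3)$, and it is exactly the step you yourself flag as ``the hard part'': transporting hypothesis $(2)$ from $\mathcal A$ to $\mathcal A^{\sharp\sharp}$. This cannot be done along the lines you sketch. Condition $(2)$ is an implication, not an identity: it is available only for pairs $a,b\in\mathcal A$ with $ab^*=b^*a=0$. The elements your decomposition argument needs it for --- matrix units of the summands $M_n(\mathcal B_i)$ of the von Neumann algebra $\mathcal A^{\sharp\sharp}$, orthogonal pairs in its abelian summands, the central projections $1_i$ --- lie in $\mathcal A^{\sharp\sharp}\setminus\mathcal A$, and orthogonal pairs there are in general not weak$^*$-limits of orthogonal pairs in $\mathcal A$; even when they are, an implication does not pass to weak$^*$-limits (only its conclusion does, and only along nets on which the hypothesis actually held). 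Arens regularity and \cite[Theorem 4.5]{J. Alaminos} provide no such lifting: that theorem concerns the one-sided condition $ab=bc=0\Rightarrow a\delta(b)c=0$ and yields a multiplier-type representation; it does not certify two-sided orthogonality relations in the bidual. So $\delta^{\sharp\sharp}$ is never shown to satisfy $(2)$ on $\mathcal A^{\sharp\sharp}$, and the summand-by-summand argument never gets off the ground. A second, independent problem: your abelian-summand step invokes Case 1 of Theorem \ref{new01}, but that argument is specific to $\mathcal M=\mathcal A\cong C(X)$ --- it multiplies the derived identity by $a$ and then reasons about functions ($f^2\bar g=0$ forces $f\bar g=0$), and it quotes \cite{Robert} and \cite{hejun}, which concern maps into the algebra itself. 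For a general Banach $*$-bimodule $\mathcal M^{\sharp\sharp}$, the element $\Delta(b)$ is not a function and none of this applies.

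The paper's proof avoids both problems precisely by never lifting the hypothesis. For a fixed self-adjoint $b\in\mathcal A$ it manufactures concrete orthogonal pairs \emph{inside} $\mathcal A$: truncations $b_n\in\mathcal A_b$ with $b_np_n=p_nb_n=b_n=b_n^*$, and positive elements $z_\lambda\in((1-p_n)\mathcal A^{\sharp\sharp}(1-p_n))\cap\mathcal A$ increasing to the open projection $1-p_n$, so that $z_\lambda b_n=b_nz_\lambda=0$. Condition $(2)$ is applied only to these pairs, and then weak$^*$-limits are taken of the resulting \emph{identities}, first in $\lambda$, then in $n$, giving $(1-r(b))\circ\Delta^{\sharp\sharp}(b)^*+\Delta^{\sharp\sharp}(1-r(b))\circ b=0$; from this one extracts $\Delta^{\sharp\sharp}(r(b))^*=\Delta^{\sharp\sharp}(r(b))$ and, by spectral approximation of $b$ by combinations of range projections, $\Delta(b)^*=\Delta(b)$. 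Once $\Delta$ is known to be $*$-preserving, condition $(2)$ turns into the ordinary two-sided zero-product condition $ab=ba=0\Rightarrow a\circ\Delta(b)+\Delta(a)\circ b=0$, and the conclusion is then quoted from \cite[Theorem 4.1]{J. Alaminos1}; no type decomposition of $\mathcal A^{\sharp\sharp}$ is needed at all. If you want to salvage your outline, this is the mechanism to adopt: apply the hypothesis only to elements of $\mathcal A$, and pass to the bidual through conclusions, never through hypotheses.
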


\begin{proof}
It is clear that $(1)\Rightarrow(2)$ and $(3)\Rightarrow(1)$. It is sufficient the
prove that $(2)\Rightarrow(3)$.

Define a linear mapping $\Delta$ from $\mathcal A$ into $\mathcal M$
by $\Delta(a)=\delta(a)-\delta(1)a$ for every $a$ in $\mathcal A$.
It is sufficient to show that $\Delta$ is a $*$-derivation.
First we prove that $\Delta(a^*)=\Delta(a)^*$ for every $a$ in $\mathcal A$.

By assumption, we can easily to show that
$$a,b\in\mathcal{A},~ab^*=b^*a=0\Rightarrow a\circ\Delta(b)^*+\Delta(a)\circ b^*=0~\mathrm{and}~\Delta(1)=0,$$
In the following, we verify $\Delta(b)=\Delta(b)^*$ for every self-adjoint element $b$ in $\mathcal A$.

Since $\Delta$ is a norm continuous linear mapping form $\mathcal A$
into $\mathcal M$, we know that $\Delta^{\sharp\sharp}:(\mathcal{A}^{\sharp\sharp}, \diamond)\rightarrow\mathcal{M}^{\sharp\sharp}$ is the
weak$^{*}$-continuous extension
of $\Delta$ to the double duals of $\mathcal{A}$ and $\mathcal{M}$.

Let $b$ be a non-zero self-adjoint element in $\mathcal A$,
$\sigma(b)\subseteq[-\|b\|,\|b\|]$
be the spectrum of $b$ and $r(b)\in\mathcal A^{\sharp\sharp}$ be the range projection of $b$.

Denote by $\mathcal A_b$ the $C^*$-subalgebra of $\mathcal A$
generated by $b$, and by $C(\sigma(b))$
the $C^*$-algebra of all continuous complex-valued functions on $\sigma(b)$.
By Gelfand theory we know that there is an isometric $*$ isomorphism between
$\mathcal A_b$ and $C(\sigma(b))$.

For every $n$ in $\mathbb{N}$, let $p_n$ be the projection in $\mathcal A_b^{\sharp\sharp}\subseteq\mathcal A^{\sharp\sharp}$
corresponding to the characteristic function $\chi_{([-\|b\|,-\frac{1}{n}]\cup[\frac{1}{n},\|b\|])\cap\sigma(b)}$
in $C(\sigma(b))$, and let $b_n$ be in $\mathcal A_b$ such that
$$b_np_n=p_nb_n=b_n=b_n^*~\mathrm{and}~\|b_n-b\|<\frac{1}{n}.$$
By \cite[Section 1.8]{sakai}, we know that $(p_n)$ converges to $r(b)$ in the strong$^*$-topology
of $\mathcal A^{\sharp\sharp}$, and hence in the weak$^*$-topology.

It is well known that $p_n$ is a closed projection in $\mathcal A_b^{\sharp\sharp}\subseteq\mathcal A^{\sharp\sharp}$ and
$1-p_n$ is an open projection in $\mathcal A_b^{\sharp\sharp}$. Thus there exists an increasing net $(z_\lambda)$ of
positive elements in $((1-p_n)\mathcal A^{\sharp\sharp}(1-p_n))\cap\mathcal A$ such that
$$0\leq z_\lambda\leq 1-p_n$$
and $(z_\lambda)$
converges to $1-p_n$ in the weak$^*$-topology of $\mathcal A^{\sharp\sharp}$.
Since
$$0\leq ((1-p_n)-z_\lambda)^2\leq (1-p_n)-z_\lambda\leq(1-p_n),$$
we have that
$(z_\lambda)$ also converges to $1-p_n$ in the strong$^*$-topology of $\mathcal A^{\sharp\sharp}$.

By $b_n=b_n^*$ and $z_{\lambda}b_n=b_nz_{\lambda}=0$, it follows that
\begin{align}
z_{\lambda}\circ\Delta^{\sharp\sharp}(b_n)^*+\Delta^{\sharp\sharp}(z_{\lambda})\circ b_n=0.                                                                                     \label{1.13}
\end{align}
Taking weak$^*$-limits in \eqref{1.13} and since $\Delta^{\sharp\sharp}$ is weak$^*$-continuous, we have that
\begin{align}
(1-p_n)\circ\Delta^{\sharp\sharp}(b_n)^*+\Delta^{\sharp\sharp}((1-p_n))\circ b_n=0.                                                                             \label{1.14}
\end{align}
Since $(p_n)$
converges to $r(b)$ in the weak$^*$-topology of $\mathcal A^{\sharp\sharp}$ and $(b_n)$ converges to $b$
in the norm-topology of $\mathcal A$,
by \eqref{1.14}, we have that
\begin{align}
(1-r(b))\circ\Delta^{\sharp\sharp}(b)^*+\Delta^{\sharp\sharp}(1-r(b))\circ b=0.                                     \label{1.15}
\end{align}
Since the range projection of every power $b^m$ with $m\in\mathbb{N}$ coincides with
the $r(b)$, and by \eqref{1.15}, it follows that
\begin{align*}
(1-r(b))\circ\Delta^{\sharp\sharp}(b^m)^*+\Delta^{\sharp\sharp}(1-r(b))\circ b^m=0
\end{align*}
for every $m\in\mathbb{N}$, and by the linearity and norm continuity of the product we have that
\begin{align*}
(1-r(b))\circ\Delta^{\sharp\sharp}(z)^*+\Delta^{\sharp\sharp}(1-r(b))\circ z=0
\end{align*}
for every $z=z^*$ in $\mathcal A_b$. A standard argument involving weak$^*$-continuity of $\Delta^{\sharp\sharp}$ gives
\begin{align}
(1-r(b))\circ\Delta^{\sharp\sharp}(r(b))^*+\Delta^{\sharp\sharp}(1-r(b))\circ r(b)=0.                                 \label{+1}
\end{align}
By \eqref{+1}, we can obtain that
$$
(\Delta^{\sharp\sharp}(r(b))^*+\Delta^{\sharp\sharp}(r(b))-\Delta^{\sharp\sharp}(1))\circ r(b)=2\Delta^{\sharp\sharp}(r(b))^*.
$$
By $\Delta(1)=0$, we have that $\Delta^{\sharp\sharp}(1)=0$. It implies that
\begin{align}
\Delta^{\sharp\sharp}(r(b))^*=\Delta^{\sharp\sharp}(r(b)).                                                                                                                       \label{1.16}
\end{align}

It is clear that every characteristic function
\begin{align}
p=\chi_{([-\|b\|,-\alpha]\cup[\alpha,\|b\|])\cap\sigma(b)}                                              \label{2.18}
\end{align}
in $C_0(\sigma(b))^{\sharp\sharp}$ with $0<\alpha<\|b\|$, is
the range projection of a function in $C(\sigma(b))$.
Moreover, every projection of the form
\begin{align}
q=\chi_{([-\beta,-\alpha]\cup[\alpha,\beta])\cap\sigma(b)}                                              \label{2.19}
\end{align}
in $C_0(\sigma(b))^{\sharp\sharp}$ with $0<\alpha<\beta<\|b\|$ can be written as the
difference of two projections of the type in \eqref{2.18}.

Since $\mathcal A_b$ and $C(\sigma(b))$ are
isometric $*$ isomorphism, and by $\Delta^{\sharp\sharp}(r(b))^*=\Delta^{\sharp\sharp}(r(b))$
for range projection of $b$ in $\mathcal A^{\sharp\sharp}$, we have that $\Delta^{\sharp\sharp}(p)^*=\Delta^{\sharp\sharp}(p)$
for every projection $p$ of the type in \eqref{2.18}. It follows that $\Delta^{\sharp\sharp}(q)^*=\Delta^{\sharp\sharp}(q)$
for every projection $q$ of the type in \eqref{2.19}.

It is well known that $b$ can be approximated in norm by finite linear combinations of mutually orthogonal
projections $q_j$ of the type in \eqref{2.19}, and $\Delta$ is continuous, we have that $\Delta(b)^*=\Delta(b)$.
Thus for every $a$ in $\mathcal A$, we can obtain that $\Delta(a)^*=\Delta(a)$.

By the assumption, it follows that
$$a,b\in\mathcal{A},~ab=ba=0\Rightarrow a\circ\Delta(b)+\Delta(a)\circ b=0.$$
By \cite[Theorem 4.1]{J. Alaminos1}, we know that $\Delta$ is a $*$-derivation.
\end{proof}

In the following we consider general $C^*$-algebras $\mathcal A$.
Let $(e_{i})_{i\in\Gamma}$ be a bounded approximate identity of $\mathcal A$,
$\mathcal M$ be an essential Banach $*$-$\mathcal A$-bimodule, and $\delta$ be a continuous linear mapping from $\mathcal A$
into $\mathcal M$, then $(\delta(e_{i}))_{i\in\Gamma}$ is bounded
and we can assume that it converges to $\xi$ in $\mathcal M^{\sharp\sharp}$
with the topology $\sigma(\mathcal M^{\sharp\sharp},\mathcal M^{\sharp})$.
It follows the next result.

\begin{theorem}
Suppose that $\mathcal A$ is a $C^*$-algebra (not necessary unital) and $\mathcal M$ is
an essential Banach $*$-$\mathcal A$-bimodule.
If $\delta$ is a continuous linear mapping from $\mathcal A$
into $\mathcal M$ such that $\xi\cdot a=a\cdot\xi$ for every $a$ in $\mathcal A$,
then the following three statements are equivalent:\\
$(1)$ $a,b\in\mathcal{A},~a\circ b^*=0\Rightarrow a\circ\delta(b)^*+\delta(a)\circ b^*=0;$\\
$(2)$ $a,b\in\mathcal{A},~ab^*=b^*a=0\Rightarrow a\circ\delta(b)^*+\delta(a)\circ b^*=0;$\\
$(3)$ $\delta(a)=\Delta(a)+\xi\cdot a$ for every a in $\mathcal A$,
where $\Delta$ is a $*$-derivation from $\mathcal A$ into $\mathcal M^{\sharp\sharp}$.
\end{theorem}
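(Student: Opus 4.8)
The plan is to dispose of the two easy implications directly and to obtain the substantial one, $(2)\Rightarrow(3)$, by transporting the problem to the second duals, where $\mathcal A^{\sharp\sharp}$ is a unital von Neumann algebra and Theorem \ref{04} is available. The implication $(1)\Rightarrow(2)$ is immediate, since $ab^*=b^*a=0$ forces $a\circ b^*=ab^*+b^*a=0$. For $(3)\Rightarrow(1)$ I would argue by a direct computation: writing $\delta(a)=\Delta(a)+\xi\cdot a$ with $\Delta$ a $*$-derivation, one has $\delta(b)^*=\Delta(b^*)+b^*\cdot\xi^*$, and taking adjoints of $\xi\cdot a=a\cdot\xi$ shows that $\xi^*$ is central as well. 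Collecting terms,
\begin{align*}
a\circ\delta(b)^*+\delta(a)\circ b^*=\bigl[a\circ\Delta(b^*)+\Delta(a)\circ b^*\bigr]+(a\circ b^*)\cdot\xi+(a\circ b^*)\cdot\xi^*.
\end{align*}
Since $\Delta$ is a $*$-derivation it is in particular a Jordan derivation, so the bracket equals $\Delta(a\circ b^*)$; hence whenever $a\circ b^*=0$ both the bracket and the two remaining terms vanish, giving $(1)$.

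For $(2)\Rightarrow(3)$ I would pass to the second duals set up above: $\mathcal A^{\sharp\sharp}$ is a unital $C^*$-algebra (a von Neumann algebra) and $\mathcal M^{\sharp\sharp}$ is a unital Banach $*$-$\mathcal A^{\sharp\sharp}$-bimodule. Let $\delta^{\sharp\sharp}\colon\mathcal A^{\sharp\sharp}\to\mathcal M^{\sharp\sharp}$ be the weak$^*$-continuous extension of $\delta$. Since the bounded approximate identity $(e_i)$ converges weak$^*$ to the unit $1$ of $\mathcal A^{\sharp\sharp}$ and $\delta^{\sharp\sharp}(e_i)=\delta(e_i)\to\xi$, weak$^*$-continuity gives $\delta^{\sharp\sharp}(1)=\xi$. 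From $\xi\cdot a=a\cdot\xi$ for $a\in\mathcal A$, together with the weak$^*$-density of $\mathcal A$ in $\mathcal A^{\sharp\sharp}$ and the separate weak$^*$-continuity of the module actions (the relevant bilinear maps are Arens regular, as recorded above), one deduces $\xi\cdot u=u\cdot\xi$ for every $u\in\mathcal A^{\sharp\sharp}$. Thus all hypotheses of Theorem \ref{04} on $\mathcal A^{\sharp\sharp}$, $\mathcal M^{\sharp\sharp}$ and $\delta^{\sharp\sharp}$ are met except for condition $(2)$ itself, which must be verified at the bidual level.

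To this end I would reformulate condition $(2)$ as the vanishing of a bilinear map on orthogonal pairs: setting $\Psi(u,w)=u\circ\delta^{\sharp\sharp}(w^*)^*+\delta^{\sharp\sharp}(u)\circ w$ gives a bilinear, separately weak$^*$-continuous map $\Psi\colon\mathcal A^{\sharp\sharp}\times\mathcal A^{\sharp\sharp}\to\mathcal M^{\sharp\sharp}$, since substituting $w=b^*$ turns the hypothesis $ab^*=b^*a=0$ into $aw=wa=0$ and the conclusion into $\Psi(a,w)=0$. Condition $(2)$ for $\delta$ says exactly that $\Psi(a,w)=0$ whenever $a,w\in\mathcal A$ satisfy $aw=wa=0$, and the goal is the same statement for all $u,w\in\mathcal A^{\sharp\sharp}$. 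The genuine obstacle is that the orthogonality set $\{(u,w):uw=wu=0\}$ is not stable under weak$^*$-approximation, so a crude density argument fails. I would instead proceed in two steps using separate continuity: first fix $c\in\mathcal A$ and show that $\Psi(\cdot,c)$ vanishes on $\{u\in\mathcal A^{\sharp\sharp}:uc=cu=0\}$, then fix such a $u$ and vary the second variable. Each step rests on the fact that the two-sided annihilator of a fixed element inside $\mathcal A$ is weak$^*$-dense in its annihilator inside $\mathcal A^{\sharp\sharp}$, which is where the von Neumann algebra structure of $\mathcal A^{\sharp\sharp}$ (Kaplansky-type density together with support-projection arguments) enters; this lifting is the technical heart of the proof. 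Once condition $(2)$ holds for $\delta^{\sharp\sharp}$, Theorem \ref{04} applies to the unital data $(\mathcal A^{\sharp\sharp},\mathcal M^{\sharp\sharp},\delta^{\sharp\sharp})$ and yields a $*$-derivation $\Delta'$ on $\mathcal A^{\sharp\sharp}$ with $\delta^{\sharp\sharp}(u)=\Delta'(u)+\xi\cdot u$; restricting to $\mathcal A$ and setting $\Delta=\Delta'|_{\mathcal A}$ produces a $*$-derivation $\Delta\colon\mathcal A\to\mathcal M^{\sharp\sharp}$ with $\delta(a)=\Delta(a)+\xi\cdot a$, which is precisely $(3)$.
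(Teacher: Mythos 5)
Your implications $(1)\Rightarrow(2)$ and $(3)\Rightarrow(1)$ are fine: the first is trivial, and your computation for the second (using centrality of $\xi$ and of $\xi^*$, plus the Jordan identity for the $*$-derivation $\Delta$) is correct. The gap is in $(2)\Rightarrow(3)$, at exactly the point you yourself call the technical heart. The claim that, for fixed $c\in\mathcal A$, the two-sided annihilator $\{a\in\mathcal A:\ ac=ca=0\}$ is weak$^*$-dense in $\{u\in\mathcal A^{\sharp\sharp}:\ uc=cu=0\}$ is false in general. Take $\mathcal A=C_0(\mathbb{R})$ and $c\geq 0$ with $\{c>0\}=(0,1)$. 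Then $\mathrm{Ann}_{\mathcal A}(c)$ consists of the functions vanishing on $[0,1]$, and its weak$^*$-closure is $p\,\mathcal A^{\sharp\sharp}$, where $p$ is the \emph{open} projection of the open set $\mathbb{R}\setminus[0,1]$; whereas $\mathrm{Ann}_{\mathcal A^{\sharp\sharp}}(c)=(1-r(c))\mathcal A^{\sharp\sharp}$, where $r(c)$ is the range projection of $c$, so that $1-r(c)$ is the \emph{closed} projection of $\mathbb{R}\setminus(0,1)$. These differ: the atom of $\mathcal A^{\sharp\sharp}$ at the point $0$ annihilates $c$, but the normal extension of the evaluation functional at $0$ kills every weak$^*$-limit of functions vanishing on $[0,1]$, so that atom is not in the weak$^*$-closure of $\mathrm{Ann}_{\mathcal A}(c)$. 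In general the weak$^*$-closure of an annihilator taken in $\mathcal A$ is a corner of an open projection, while the annihilator taken in $\mathcal A^{\sharp\sharp}$ is a corner of a closed one, and they coincide only in special cases. Consequently the orthogonality hypothesis cannot be lifted from $\mathcal A$ to $\mathcal A^{\sharp\sharp}$ by density, your second step suffers from the same defect, and Theorem \ref{04} cannot be applied as a black box to $(\mathcal A^{\sharp\sharp},\mathcal M^{\sharp\sharp},\delta^{\sharp\sharp})$.

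The paper evades this obstruction by never lifting the hypothesis at all. It defines $\Delta(a)=\delta(a)-\xi\cdot a$ on $\mathcal A$, establishes $\Delta^{\sharp\sharp}(1)=0$ together with the unit identities in the iterated duals, and then reruns the proof of Theorem \ref{04} verbatim: in that proof the orthogonality condition is only ever invoked for concrete pairs lying in $\mathcal A$ itself, namely $b_n\in\mathcal A_b$ and the net $(z_\lambda)\subseteq((1-p_n)\mathcal A^{\sharp\sharp}(1-p_n))\cap\mathcal A$, which exists precisely because $1-p_n$ is an \emph{open} projection; the weak$^*$-limits are then taken of the resulting identities, not of the hypothesis. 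If you want to salvage your outline, this is the repair: replace the attempted verification of condition $(2)$ for $\delta^{\sharp\sharp}$ on all orthogonal pairs of $\mathcal A^{\sharp\sharp}$ by the range-projection argument of Theorem \ref{04} applied to $\Delta$, which only consumes orthogonal pairs inside $\mathcal A$.
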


\begin{proof}
It is clear that $(1)\Rightarrow(2)$ and $(3)\Rightarrow(1)$. It is only need to
prove that $(2)\Rightarrow(3)$.

Define a linear mapping $\Delta$ from $\mathcal{A}$
into $\mathcal M^{\sharp\sharp}$ by
$$\Delta(a)=\delta(a)-\xi\cdot a$$
for every $a$ in $\mathcal A$.
It is sufficient to show that $\Delta$ is a $*$-derivation.

By the definition of $\Delta$ and $\xi\cdot a=a\cdot\xi$ for every $a$ in $\mathcal A$, we can easily to show that
$$a,b\in\mathcal{A},~ab^*=b^*a=0\Rightarrow a\circ\Delta(b)^*+\Delta(a)\circ b^*=0.$$
By \cite[Proposition 2.9.16]{H. Dales}, we know that $(e_{i})_{i\in\Gamma}$
converges to the identity $1$ in $\mathcal A^{\sharp\sharp}$
with the topology $\sigma(\mathcal A^{\sharp\sharp},\mathcal A^{\sharp})$.
By the proof of Theorem \ref{++++5}, we know that $\Delta(e_i)=\delta(e_i)-e_i\cdot\xi$ converges to zero in $\mathcal M^{\sharp\sharp}$
with the topology $\sigma(\mathcal M^{\sharp\sharp},\mathcal M^{\sharp})$,
and we can obtain that
\begin{align*}
m^{\sharp\sharp}\cdot1=m^{\sharp\sharp}
\end{align*}
for every $m^{\sharp\sharp}$ in $\mathcal{M}^{\sharp\sharp}$. Since
$\mathcal{M}^{\sharp\sharp}$ is a Banach $*$-$\mathcal A^{\sharp\sharp}$-bimodule, we have that
\begin{align*}
1\cdot m^{\sharp\sharp}=m^{\sharp\sharp}
\end{align*}
for every $m^{\sharp\sharp}$ in $\mathcal{M}^{\sharp\sharp}$.
Since $\Delta$ is a norm-continuous linear mapping form $\mathcal A$
into $\mathcal M^{\sharp\sharp}$, $\Delta^{\sharp\sharp}:(\mathcal{A}^{\sharp\sharp}, \diamond)\rightarrow\mathcal{M}^{\sharp\sharp\sharp\sharp}$ is the
weak$^{*}$-continuous extension
of $\Delta$ to the double duals of $\mathcal{A}$ and $\mathcal{M}^{\sharp\sharp}$ such that
$\Delta^{\sharp\sharp}(1)=0$.

By \cite[Proposition A.3.52]{H. Dales}, we know that the mapping $m^{\sharp\sharp\sharp\sharp}\mapsto m^{\sharp\sharp\sharp\sharp}\cdot 1$
from $\mathcal{M}^{\sharp\sharp\sharp\sharp}$ into itself is $\sigma(\mathcal M^{\sharp\sharp\sharp\sharp},\mathcal M^{\sharp\sharp\sharp})$-continuous,
and by the
$\sigma(\mathcal M^{\sharp\sharp\sharp\sharp},\mathcal M^{\sharp\sharp\sharp})$-denseness of $\mathcal M^{\sharp\sharp}$
in $\mathcal M^{\sharp\sharp\sharp\sharp}$,
we have that
\begin{align*}
m^{\sharp\sharp\sharp\sharp}\cdot 1=m^{\sharp\sharp\sharp\sharp}                  
\end{align*}
for every $m^{\sharp\sharp\sharp\sharp}$ in $\mathcal M^{\sharp\sharp\sharp\sharp}$.
Since $\mathcal{M}^{\sharp\sharp\sharp\sharp}$ is a Banach $*$-$\mathcal A^{\sharp\sharp}$-bimodule, we have that
\begin{align*}
1\cdot m^{\sharp\sharp\sharp\sharp}=m^{\sharp\sharp\sharp\sharp}
\end{align*}
for every $m^{\sharp\sharp\sharp\sharp}$ in $\mathcal{M}^{\sharp\sharp\sharp\sharp}$.

Finally, we use the same proof of Theorem \ref{04} and show that $\Delta$ is a $*$-derivation from $\mathcal{A}$
into $\mathcal M^{\sharp\sharp}$.
\end{proof}

\textbf{Remark 3}. In \cite{A. Essaleh}, A. Essaleh and A. Peralta introduce the concept of a triple derivation on $C^*$-algebras.
Suppose that $\mathcal A$ is a $C^*$-algebra. Let $a$, $b$ and $c$ be in $\mathcal A$,
define the \emph{ternary product} by
$\{a,b,c\}=\frac{1}{2}(ab^*c+cb^*a)$.
A linear mapping $\delta$ from $\mathcal A$ into itself is called a \emph{triple derivation} if
$$\delta\{a,b,c\}=\{\delta(a),b,c\}+\{a,\delta(b),c\}+\{a,b,\delta(c)\}$$
for each $a$, $b$ and $c$ in $\mathcal A$. Let $z$ be an element in $\mathcal A$.
$\delta$ is called \emph{triple derivation at $z$} if
$$a,b,c\in\mathcal{A},~\{a,b,c\}=z\Rightarrow \delta(z)=\{\delta(a),b,c\}+\{a,\delta(b),c\}+\{a,b,\delta(c)\}.$$
In \cite{A. Essaleh}, A. Essaleh and A. Peralta
prove that every continuous linear mapping $\delta$ which is triple derivations at zero
from a unital $C^*$-algebra into itself with $\delta(1)=0$ is a $*$-derivation.

On the other hand, it is apparent to show that if $\delta$ is triple derivation at zero, then $\delta$ satisfies that
$$a,b\in\mathcal{A},~ab^*=b^*a=0\Rightarrow a\circ\delta(b)^*+\delta(a)\circ b^*=0.$$
Thus Theorem \ref{04} generalizes \cite[Corollary 2.10]{A. Essaleh}.

\textbf{Remark 4}. In \cite{M. Bresar J. Vukman},
M. Bre\v{s}ar and J. Vukman introduce the left derivations and Jordan left derivations.
A linear mapping $\delta$ from an algebra $\mathcal{A}$ into its bimodule $\mathcal{M}$ is
called a \emph{left derivation} if $\delta(ab)=a\delta(b)+b\delta(a)$ for each $a,b$ in $\mathcal{A}$; and
$\delta$ is called a \emph{Jordan left derivation} if $\delta(a\circ b)=2a\delta(b)+2b\delta(a)$ for each $a,b$ in $\mathcal{A}$.

Let $\mathcal A$ be a $*$-algebra and $\mathcal M$ be a $*$-$\mathcal A$-bimodule.
A left derivation (Jordan left derivation) $\delta$ from $\mathcal{A}$ into $\mathcal{M}$ is called a
\emph{$*$-left derivation} (\emph{$*$-Jordan left derivation})  if $\delta(a^*)=\delta(a)^*$ for every $a$ in $\mathcal A$.

We also can investigate
the following conditions on a linear mapping $\delta$ from $\mathcal{A}$ into $\mathcal{M}$:
\begin{align*}
&(\mathbb{J}_{1})~a,b\in\mathcal{A},~ab^*=0\Rightarrow a\delta(b)^*+b^*\delta(a)=0;\\
&(\mathbb{J}_{2})~a,b\in\mathcal{A},~a\circ b^*=0\Rightarrow a\delta(b)^*+b^*\delta(a)=0;\\
&(\mathbb{J}_{3})~a,b\in\mathcal{A},~ab^*=b^*a=0\Rightarrow a\delta(b)^*+b^*\delta(a)=0.
\end{align*}

{\bf Acknowledgement.}
This research was partly supported by the National Natural Science
Foundation of China (Grant Nos. 11801342, 11801005, 11871021);
Natural Science Foundation of Shaanxi Province (Grant No. 2020JQ-693);
Scientific research plan projects of Shannxi Education Department (Grant No. 19JK0130).



\begin{thebibliography}{1}
\bibitem{J. Alaminos}
J. Alaminos, M. Bre\v{s}ar, J. Extremera and A. Villena.
\textit{Maps preserving zero products},
Studia Math., \textbf{193} (2009) 131-159.

\bibitem{J. Alaminos1}
J. Alaminos, M. Bre\v{s}ar, J. Extremera and A. Villena.
\textit{Characterizing Jordan maps on $C^{*}$-algebras through zero products},
Proc. Edinburgh Math. Soc.,
\textbf{53} (2010), 543-555.

\bibitem{J. Alaminos2}
J. Alaminos, M. Bre\v{s}ar, J. Extremera and A. Villena.
\textit{Orthogonality preserving linear maps on group algebras},
Math. Proc. Camb. Phil. Soc.,
\textbf{158} (2015), 493-504.

\bibitem{anguangyu} G. An and J. Li.
\textit{Characterizations of linear mappings through zero products or zero Jordan products},
Electron. J. Linear Algebra,
\textbf{31} (2016), 408-424.

\bibitem{Guangyu. An}
G. An, J. Li and J. He.
\textit{Zero Jordan product determined algebras},
Linear Algebra Appl., \textbf{475} (2015), 90-93.

\bibitem{M. Bresar1}
M. Bre\v{s}ar.
\textit{Characterizing homomorphisms, derivations and multipliers in rings with idempotents},
Proc. Roy. Soc. Edinburgh Sect. A, \textbf{137} (2007), 9-21.

\bibitem{M. Bresar 3}
M. Bre\v{s}ar.
\textit{Multiplication algebra and maps determined by zero products},
Linear Multilinear Algebra, \textbf{60} (2012), 763-768.

\bibitem{M. Bresar J. Vukman} M. Bre\v{s}ar and J. Vukman.
\textit{On left derivations and related mappings},
Proc. Amer. Math. Soc.,
\textbf{110} (1990), 7-16.

\bibitem{Cuntz} J. Cuntz. \textit{On the continuity of Semi-Norms on operator algebras},
Math. Ann., \textbf{220} (1976), 171-183.

\bibitem{H. Dales} H. Dales. \textit{Banach Algebras and Automatic Continuity},
London Math. Soc. Monog.
Ser. \textbf{24}, Oxford Univ. Press, New York, 2000.

\bibitem{H. Dales2} H. Dales, F. Ghahramani and N. Gr{\o}nb{\ae}k. \textit{Derivations into iterated duals of Banach algebra},
Studia Math.,
\textbf{128} (1998), 19-54.

\bibitem{A. Essaleh}
A. Essaleh and A. Peralta.
\textit{Linear maps on $C^*$-algebras which are derivations or triple derivations at a point},
Linear Algebra Appl., \textbf{538} (2018), 1-21.

\bibitem{B. Fadaee}
B. Fadaee and H. Ghahramani.
\textit{Linear maps behaving like derivations or anti-derivations at orthogonal elements on $C^*$-algebras},
arXiv: 1907.03594v1.


\bibitem{P. Fillmore}
P. Fillmore and D. Topping.
\textit{Operator algebras generated by projections},
Duke Math. J., \textbf{34} (1967), 333-336.

\bibitem{Hoger Ghahramani} H. Ghahramani.
\textit{On derivations and Jordan derivations through zero products},
Oper. Matrices,
\textbf{8} (2014), 759-771.

\bibitem{Hoger Ghahramani3} H. Ghahramani.
\textit{Characterizing Jordan derivations of matrix rings through zero products},
Math. Slovaca,
\textbf{65} (2015), 1277-1290.

\bibitem{Hoger Ghahramani1} H. Ghahramani.
\textit{Linear maps on group algebras determined by the action of the derivations or anti-derivations on a set of orthogonal element},
Results Math.,
\textbf{73} (2018), no. 4, Art. 133, 14pp.

\bibitem{Hoger Ghahramani2} H. Ghahramani and Z. Pan.
\textit{Linear maps on $*$-algebras acting on orthogonal element like derivations or anti-derivations},
Filomat, \textbf{13} (2018), 4543-4554.

\bibitem{hejun} J. He, J. Li and W. Qian.
\textit{Characterizations of centrelizers and derivations on some algebras},
J. Korean Math. Soc., \textbf{54} (2017), 685-696.

\bibitem{hejazian} S. Hejazian and A. Niknam.
\textit{Modules, annihilators and module derivations of $JB^{*}$-algebras}, Indian J. Pure Appl. Math., \textbf{27} (1996), 129-140.


\bibitem{Johnson1} B. Johnson. \textit{Symmetric amenability and the nonexistence of Lie and Jordan derivations}£¬
Math. Proc. Cambd. Philos. Soc., \textbf{120} (1996), 455-473.


\bibitem{R.Kadison J. Ringrose} R. Kadison and J. Ringrose. \textit{Fundamentals of the Theory of Operator Algebras},
I, Pure Appl. Math. 100, Academic Press, New York, 1983.

\bibitem{Robert}
R. Kantrowitz and M. Neumann. \textit{Disjointness preserving and local operators on algebras of differentiable functions},
Glasgow Math. J., \textbf{43} (2001), 295-309.

\bibitem{Kishimoto}
A. Kishimoto. \textit{Dissipations and Derivations}, Commun. Math.
Phys., \textbf{47} (1976), 25-32.


\bibitem{T. Lam}
T. Lam. \textit{A first course in noncommutative rings}, Springer-Verlag, New
York, 1991.

\bibitem{Viktor Losert} V. Losert. \textit{The derivation problem for group algebras}£¬
Ann. Math., \textbf{168} (2008), 221-246.

\bibitem{M. Muratov} M. Muratov and V. Chilin. \textit{Algebras of measurable and locally measurable operators},
Kyiv, Pratse In-ty matematiki NAN ukraini., \textbf{69} (2007), 390 pp, (Russian).

\bibitem{sakai}
S. Sakai. \textit{$C^*$-Algebras and $W^*$-Algebras},
Springer-Verlag, Berlin, 1971.

\end{thebibliography}
\end{document}